\newtheorem{assumption}[theorem]{Assumption}
\newcommand{\dist}{{\rm dist}}
\newcommand{\rset}{\mathbb{R}}
\newcommand{\prox}{\ensuremath{\operatorname{prox}}}
\newcommand{\red}{\textcolor{black}}
\newcommand{\blue}{\textcolor{black}}
\begin{document}

\title{Stochastic subgradient projection methods for composite  optimization with  functional constraints}

\author{\name Ion Necoara  \email ion.necoara@upb.ro \\
       \addr Automatic Control and Systems Engineering Department,  University Politehnica  Bucharest, Spl. Independentei 313, 060042 Bucharest, Romania. \\ 
       %and\\  
       Gheorghe Mihoc-Caius Iacob Institute of Mathematical Statistics and Applied Mathematics of the  Romanian Academy, 050711 Bucharest, Romania. 
       \AND
       \name Nitesh Kumar Singh \email nitesh.nitesh@stud.acs.upb.ro \\
       \addr Automatic Control and Systems Engineering Department,  University Politehnica  Bucharest,   Spl. Independentei 313, 060042 Bucharest, Romania. 
}

\editor{Silvia Villa}

\maketitle

\begin{abstract} %
In this paper we consider convex optimization problems with  stochastic composite objective function subject to (possibly) infinite intersection of constraints. The objective function is   expressed in terms of expectation operator over a sum of two terms   satisfying a stochastic bounded gradient condition, with or without strong convexity type properties. In contrast to the classical approach, where the constraints are usually represented as intersection of simple sets, in this paper we consider that each constraint set is  given as the level set of a convex but not necessarily differentiable function.  Based on the flexibility offered by our general optimization model we consider a stochastic subgradient method with random  feasibility updates.  At each iteration, our algorithm takes a stochastic proximal (sub)gradient step aimed at  minimizing the objective function and then a subsequent subgradient step  minimizing the feasibility violation of the observed random constraint.  We analyze the convergence behavior of the proposed algorithm  for  diminishing stepsizes and for the case when the objective function is  convex or \red{strongly convex}, unifying the nonsmooth and smooth cases.  We prove sublinear convergence rates for this stochastic subgradient algorithm, which are known to be optimal for subgradient  methods on  this class of problems. When the objective function has a linear least-square form and the constraints are polyhedral, it is shown that the algorithm converges linearly.   Numerical evidence supports the effectiveness of our method in real problems.
\end{abstract}

\begin{keywords}
Stochastic  convex optimization,   functional  constraints, stochastic
subgradient,  rate of convergence, constrained least-squares, 
robust/sparse svm.
\end{keywords}

\section{Introduction}
\noindent The large sum of functions in the objective function and/or the large  number of constraints in  most of the practical optimization applications,  including   machine learning and statistics \citep{Vap:98,BhaGra:04}, signal processing \citep{Nec:20,Tib:11}, computer science \citep{KunBac:18},   distributed control \citep{NedNec:14}, operations research and finance \citep{RocUry:00},  create several theoretical and computational challenges.   For example, these problems are becoming increasingly large in terms of both the number of variables and the size of training data and they  are  usually nonsmooth due to the use of regularizers, penalty terms and the presence of constraints. Due to these challenges, (sub)gradient-based  methods are widely applied.  In particular, proximal gradient algorithms \citep{Nec:20,RosVil:14} are natural in applications where the function to be minimized is in composite form, i.e. the sum of a smooth  term and a nonsmooth regularizer (e.g.,  the indicator function of some simple constraints), as  e.g. the empirical risk in machine learning. In these algorithms, at each iteration the proximal operator defined by the nonsmooth component  is applied to the gradient descent step for the smooth data fitting component.    In practice, it is important to consider situations where these operations cannot be performed exactly. For example, the case where the gradient, the proximal operator or the projection can be computed only up to an error have been considered  in  \citep{DevGli:14,NedNec:14,PatNec:18,RasCha:20}, while  the situation  when only stochastic estimates of these operators   are available have been studied in \citep{RosVil:14,HarSin:16,PatNec:17,HerStu:20}. This latter setting, which is also of interest here,  is very important  in machine learning, where we have to minimize an expected  objective function with or without constraints from random samples \citep{BhaGra:04}, or  in statistics, where we need to minimize a finite sum objective  subject to functional constraints   \citep{Tib:11}.   
 
 \medskip 
 
\noindent   \textit{Previous work}. A very popular approach for minimizing an  expected or finite sum objective function is the stochastic gradient descent (SGD) \citep{RobMon:51, NemYud:83, HarSin:16} or the stochastic proximal point (SPP) algorithms \citep{MouBac:11, NemJud:09, Nec:20, PatNec:17, RosVil:14}.  In these  studies sublinear convergence is derived for SGD or SPP with decreasing  stepsizes  under the assumptions that  the objective function is smooth and (strongly) convex.   For nonsmooth stochastic convex optimization one can recognize two main approaches. The first one uses  stochastic variants of  the subgradient method combined with different averaging techniques, see e.g.   \citep{NemJud:09, YanLin:16}. The second line of research   is based on stochastic variants of  the proximal gradient method under the assumption that  the expected objective  function is in composite form  \citep{DucSin:09, Nec:20, RosVil:14}.  For both approaches, using decreasing  stepsizes,  sublinear convergence rates are derived under the assumptions that  the objective function is (strongly) convex.  Even though SGD and SPP are well-developed  methodologies, they only apply to  problems with simple constraints, requiring the whole feasible set  to be  projectable.  

\medskip 

\noindent  In spite of its wide applicability, the study on efficient solution methods for optimization problems with many constraints  is still limited.  The most prominent line of research in this area is the alternating projections, which focus  on applying random projections for solving problems that involve intersection of a (infinite) number of sets. The case when the objective function is not present in the formulation,  which corresponds to the convex feasibility problem,    stochastic alternating projection algorithms were analyzed  e.g., in~\citep{BauBor:96},  with linear convergence rates, provided that the sets satisfy some linear regularity condition.  Stochastic forward-backward algorithms have been also applied to solve optimization problems with many constraints. However, the papers introducing those general algorithms focus on proving only asymptotic convergence results without rates, or they assume the number of constraints is finite, which is more restricted than our settings, see e.g. \citep{BiaHac:17, Xu:18, WanChe:15}. In the case where the number of constraints  is finite and the objective function is deterministic, Nesterov's smoothing framework is studied in~\citep{TraFer:18} under the setting of accelerated proximal gradient methods.   Incremental subgradient  methods or primal-dual approaches were also proposed for solving   problems with finite intersection of simple sets through an exact penalty reformulation  in ~\citep{Ber:11,KunBac:18}.  

\medskip 

\noindent  The papers most related to our work are \citep{Ned:11, AngNec:19},  where  subgradient methods with random feasibility steps are  proposed for solving convex problems with \textit{deterministic} objective and many functional constraints.  However, the optimization problem, the algorithm  and consequently the convergence analysis are different from the present paper.  In particular, our algorithm is a \textit{stochastic proximal gradient}  extension of the algorithms proposed in~\citep{Ned:11, AngNec:19}.  Additionally, the stepsizes  in~\citep{Ned:11, AngNec:19} are chosen decreasing, while in the present work for strongly like  objective functions we derive  insightful stepsize-switching rules which describe when one should switch from a constant to a decreasing stepsize regime.  	Furthermore,  in~\citep{Ned:11} and  \citep{AngNec:19} sublinear convergence rates are established either   for  convex or strongly convex deterministic objective functions, respectively,  while in this paper  we prove (sub)linear rates under  an expected composite  objective function   which is either convex or strongly convex.   Moreover, \citep{Ned:11, AngNec:19} present separately the convergence analysis for smooth and nonsmooth objective, while in this paper we present a unified convergence analysis covering both cases through the so-called stochastic bounded gradient condition.  Hence, since we deal with  stochastic composite objective functions, smooth or nonsmooth, convex or strongly convex,  and since we consider a stochastic proximal gradient with new stepsize rules, our convergence analysis  requires additional insights that differ from that of~\citep{Ned:11,AngNec:19}.

\medskip 

\noindent In~\citep{PatNec:17} a  stochastic optimization problem with infinite intersection of sets is considered and  stochastic proximal point steps are combined  with alternating projections for solving it.  However,  in order to prove sublinear convergence rates, \citep{PatNec:17} requires   strongly convex and smooth objective functions,  while our results are valid also for convex non-smooth functions. Lastly,~\citep{PatNec:17} assumes the projectability of individual sets, whereas in our case, the constraints might not be projectable.  Finally, in all these studies the nonsmooth component is assumed to be proximal, i.e. one can easily compute  its proximal operator. This assumption is restrictive, since in many applications the nonsmooth term is also expressed as expectation or finite sum of nonsmooth functions, which individually are proximal, but it is hard to compute the proximal operator for the  whole nonsmooth component, as e.g.,  in  support vector machine or generalized lasso problems \citep{VilRos:14,RosVil:14}. Moreover, all the convergence results from  the previous papers are derived  separately  for the smooth and the nonsmooth stochastic optimization problems.  

\medskip 

\noindent   \textit{Contributions}.   In this paper we remove the previous  drawbacks. We propose  a stochastic subgradient algorithm for solving general convex optimization problems having the objective function expressed in terms of expectation operator over a sum of two terms  subject to (possibly infinite number of) functional constraints.  The only assumption we require is to have access to an unbiased estimate of the (sub)gradient and of the proximal operator of  each of these two terms and to the subgradients of the constraints. To deal with such    problems, we propose  a stochastic  subgradient  method  with random feasibility updates.  At each iteration, the algorithm takes a stochastic subgradient step aimed at only minimizing the expected objective function, followed by a feasibility step  for  minimizing the feasibility violation of the observed random constraint achieved through Polyak's subgradient iteration, see \citep{Pol:69}.  In doing so, we can avoid the need for projections onto the whole set of  constraints, which may be  expensive computationally.    The proposed algorithm is applicable to the situation where the whole objective function and/or constraint set are not known in advance, but they are rather learned in time through observations.  

\medskip 

\noindent We present  a general framework for the convergence  analysis of  this stochastic subgradient  algorithm which  is based on the assumptions that   the objective function satisfies a stochastic bounded gradient condition,  with or without strong convexity and the subgradients of the functional constraints are bounded.  These assumptions  include the most well-known  classes of objective functions and of constraints analyzed in the literature:  composition of a  nonsmooth function  and a smooth function, with or without strong convexity, and nonsmooth Lipschitz  functions, respectively.  Moreover, when the objective function satisfies  the strong convexity property, we prove insightful stepsize-switching rules which describe when one should switch from a constant to a decreasing stepsize regime.  Then, we prove sublinear convergence rates  for the weighted averages of the iterates  in terms of expected distance to the constraint set, as well as for the expected optimality of the function values/distance to the optimal set.  Under some special conditions we also derive linear rates.   Our convergence  estimates   are known to be optimal for this class of stochastic subgradient schemes  for solving (nonsmooth) convex problems with functional constraints.    Besides providing a general framework for the design and analysis of   stochastic  subgradient methods, in special cases, where complexity bounds are known for some particular problems, our convergence results recover the  existing bounds.  In particular,  for problems without functional constraints we recover the  complexity bounds from \citep{Nec:20} and for linearly constrained least-squares problems we get  similar  convergence bounds  as in  \citep{LevLew:10}. 

\medskip 

\noindent   \textit{Content}.  In Section 2 we present our optimization problem and the main assumptions.  In Section 3 we design a stochastic subgradient projection algorithm and analyze its convergence properties, while is Section 4 we adapt this algorithm to constrained least-squares problems and derive linear convergence. Finally, in Section 5 detailed numerical simulations are provided that support the effectiveness of our method in real problems.

%%%%%%%%%%%%%%%%%%%%%%%%%%%%%%%%%%

\section{Problem formulation and assumptions}
We consider the general convex composite  optimization problem with expected objective function and functional constraints:
\begin{equation}
	\label{eq:prob}
	\begin{array}{rl}
		F^* = & \min\limits_{x \in \mathcal{Y} \subseteq \mathbb{R}^n} \; F(x)  \quad \left(:= \mathbb{E}[f(x,\zeta) + g(x,\zeta)]\right)\\
		& \text{subject to } \;  h(x,\xi) \le 0 \;\; \forall \xi \in \Omega_2,
	\end{array}
\end{equation}
where the composite objective function $F$ has a stochastic representation in the form of expectation w.r.t.  a random variable $\zeta\in\Omega_1$, i.e., $\mathbb{E} [f(x,\zeta)+g(x,\zeta)]$,  $\Omega_2$ is an arbitrary collection of indices and $\mathcal{Y}$ is a simple closed convex set.  \blue{Hence, we separate the feasible set in two parts:  one set, $\mathcal{Y}$, admits easy projections and the other part is not easy for projection as it is  described by the level sets of  some convex functions $h(\cdot,\xi)$'s.}  Moreover,  $f(\cdot,\zeta), g(\cdot,\zeta)$ and $h(\cdot,\xi)$ are  proper lower-semicontinuous convex functions \blue{ containing  the interior of $\mathcal{Y}$  in their domains}.   One can notice that more commonly, one sees a single $g$ representing the regularizer on the parameters in the formulation \eqref{eq:prob}. However, there are also applications where one encounters terms of the form $\mathbb{E}[g(x,\zeta)]$ or $\sum_{\xi \in \Omega_2} g(x,\xi)$, as e.g.,  in Lasso problems with mixed $\ell_1-\ell_2$ regularizers or regularizers with overlapping groups, see e.g.,  \citep{VilRos:14}. Multiple functional constraints, onto which is difficult to project,  can arise from robust classification in machine learning,  chance constrained problems, min-max games and control, see  \citep{BhaGra:04,RocUry:00,PatNec:17}.  For further use, we define the following functions: $F(x,\zeta) = f(x,\zeta)+g(x,\zeta)$, $f(x) = \mathbb{E} [f(x,\zeta) ]$ and $g(x) = \mathbb{E} [g(x,\zeta)]$.  Moreover,   $\mathcal{Y}$ is assumed to be simple, i.e. one can easily compute the projection of a point onto this set. Let us define the individual sets $\mathcal{X}_\xi$ as $\mathcal{X}_\xi=\left\{ x \in  \mathbb{R}^n: \;  h(x,\xi)\le 0 \right\}$ for all $\xi \in \Omega_2.$ Denote the feasible set of \eqref{eq:prob} by:  
$$\mathcal{X}=\left\{ x \in  \mathcal{Y}: \;  h(x,\xi)\le 0 \;\;  \forall \xi \in \Omega_2 \right\} =  \mathcal{Y} \cap \left(\cap_{\xi \in \Omega_2} \mathcal{X}_\xi \right). $$ We  assume $\mathcal{X}$ to be nonempty.  We also assume that the optimization problem \eqref{eq:prob} has finite optimum and we let  $F^*$ and $\mathcal{X}^*$ denote the optimal value and the optimal set, respectively:
\[F^*=\min_{x  \in \mathcal{X}} F(x) := \mathbb{E}[F(x,\zeta)], \quad \mathcal{X}^*=\{x\in \mathcal{X} \mid F(x)=F^*\} \not= \emptyset.\]
Further, for any $x \in \rset^n$ we denote its projection onto the optimal set  $\mathcal{X}^*$ by $\bar{x}$, that is:  $$\bar{x} = \Pi_{\mathcal{X}^*}(x).$$ 
For the objective function we  assume that the first term $f(\cdot,\zeta)$ is either differentiable or nondifferentiable function and we use,  with some abuse of notation, the same notation for the gradient or the subgradient of $f(\cdot,\zeta)$ at $x$, that is $\nabla f(x, \zeta) \in \partial f(x, \zeta)$, where the subdifferential $\partial f(x, \zeta)$ is either a singleton or a nonempty  set for any $\zeta \in \Omega_1$. The other term $g(\cdot,\zeta)$  is assumed to have an easy proximal operator for any $\zeta \in \Omega_1$:
\[   \prox_{\gamma g(\cdot,\zeta)}(x) = \arg \min\limits_{y \in \rset^m} g(y,\zeta) + \frac{1}{2 \gamma} \|y
-x\|^2. \]

\noindent Recall that  the proximal operator of the indicator function of a closed convex set  becomes the projection. We consider additionally the following assumptions on  the objective function and constraints:
\begin{assumption}
	\label{assumption1}
	The (sub)gradients of $F$ satisfy a stochastic bounded gradient condition, that is there exist non-negative constants  $L \geq 0$ and $B \geq 0$ such that:
	\begin{equation}
		\label{as:main1_spg} 
		B^2 +  L(F(x) - F^*) \geq \mathbb{E}[\| \nabla F(x, \zeta) \|^2 ] \quad  \forall x \in  \mathcal{Y}.
	\end{equation}
\end{assumption}

\noindent \red{From Jensen's inequality, taking $x=x^* \in \mathcal{X}^*$ in  \eqref{as:main1_spg}, we get:
\begin{equation}
	\label{as:main1_spg2} 
	B^2  \geq \mathbb{E}[\| \nabla F(x^*, \zeta) \|^2 ] \ge \| \mathbb{E}[\nabla F(x^*, \zeta)] \|^2 = \| \nabla F(x^*)\|^2 \quad \forall  x^* \in \mathcal{X}^*.
\end{equation}
We also assume $F$ to satisfy \red{a (strong) convexity} condition: 
\begin{assumption}
	\label{assumption2}
	The function  $F$ satisfies a \red{(strong) convex} condition on $\mathcal{Y}$, i.e., there exists non-negative constant $\mu \geq 0$ such that:
	\begin{equation}
		\label{as:strong_spg} 
		F(y) \geq F(x) + \langle \nabla F(x), y-x \rangle +  \frac{\mu}{2}  \|y  - x\|^2 \quad \forall x, y \in \mathcal{Y}.
	\end{equation}
\end{assumption}
}

\noindent  Note that when $\mu=0$ relation 	\eqref{as:strong_spg} states that $F$ is convex on $\mathcal{Y}$. Finally, we assume boundedness on the subgradients of  the functional constraints:
\begin{assumption}
	\label{assumption3}
	The functional constraints $h(\cdot,\xi)$  have  bounded subgradients on  $ \mathcal{Y}$, i.e., there exists non-negative constant $B_h>0$ such that for all  $\nabla h(x,\xi)  \in \partial h(x,\xi)$, we have: 
	$$ \|  \nabla h(x,\xi) \| \le B_h \quad   \forall  x  \in   \blue{\mathcal{Y}}  \;\;  \text{and } \;\;  \xi \in \Omega_2. $$
\end{assumption}  

\noindent Note that our assumptions are quite general and cover the most well-known  classes of functions analyzed in the literature. In particular,   Assumptions 	\ref{assumption1} and \ref{assumption2},  related to the objective function,  covers the class of non-smooth Lipschitz  functions and composition of a (potentially) non-smooth function  and a smooth function, with or without strong convexity, as the following examples show. 

\medskip 

\noindent \textbf{Example 1} [Non-smooth (Lipschitz) functions satisfy Assumption \ref{assumption1}]:
Assume that the functions $f$ and $g$ have bounded
(sub)gradients:
\[ \|\nabla f (x, \zeta)\| \leq B_f  \quad \text{and} \quad
\|\nabla g(x,\zeta) \| \leq B_g \quad \forall x  \in \mathcal{Y}.  \]
Then, obviously Assumption \ref{assumption1} holds with $ L=0 \quad  \text{and} \quad B^2 = 2 B_f^2
+ 2 B_g^2.$

\medskip 

\noindent \textbf{Example 2} [Smooth (Lipschitz gradient) functions satisfy Assumption \ref{assumption1}]:  Condition  \eqref{as:main1_spg}  includes the class of  functions formed as a sum of two terms,  $f(\cdot, \zeta)$  \red{convex} having Lipschitz continuous gradient  and $g(\cdot, \zeta)$   \red{convex} having bounded subgradients, and  $\mathcal{Y}$ bounded. 
Indeed, let us assume that the convex function  $f(\cdot, \zeta)$ has  Lipschitz continuous 
gradient, i.e. there exists $L_f(\zeta)>0$ such that:
\[  \|\nabla f(x, \zeta) - \nabla f(\bar{x}, \zeta)\| \leq L_f(\zeta) \|x - \bar{x}\| \quad \forall x \in \mathcal{Y}. \]
Using standard arguments (see Theorem 2.1.5 in \citep{Nes:18}), we have:
\[  f(x, \zeta) -  f(\bar{x}, \zeta) \geq  \langle \nabla f(\bar{x},\zeta), x -\bar{x} \rangle    + \frac{1}{2L_f(\zeta)} \|\nabla f(x, \zeta) - \nabla f(\bar{x}, \zeta) \|^2. \]
Since  $g(\cdot, \zeta)$ is also convex, then adding  $g(x, \zeta) - g(\bar{x}, \zeta) \geq \langle \nabla g(\bar{x}, \zeta), x -\bar{x} \rangle $ in the previous inequality, where  $\nabla g(\bar{x}, \zeta) \in \partial g(\bar{x}, \zeta)$, we get:
\begin{align*}
F(x,\zeta) -  F(\bar{x},\zeta)  & \geq  \langle \nabla F(\bar{x}, \zeta), x -\bar{x} \rangle    + \frac{1}{2L_f(\zeta)} \|\nabla f(x, \zeta) - \nabla f(\bar{x}, \zeta) \|^2,
\end{align*}
where we used that $\nabla F(\bar{x}, \zeta) = \nabla f(\bar{x}, \zeta) + \nabla g(\bar{x}, \zeta) \in \partial F(\bar{x}, \zeta)$.  Taking expectation w.r.t. $\zeta$ and assuming that the set $\mathcal{Y}$ is bounded, with the diameter $D$, then after using Cauchy-Schwartz inequality,   we get (we also assume  $L_f(\zeta) \leq L_f$ for all $\zeta$):
\begin{align*}
F(x) - F^*  & \geq \frac{1}{2L_f} \mathbb{E} [\|\nabla f(x, \zeta) - \nabla f(\bar{x}, \zeta) \|^2] - D\|\nabla F(\bar{x})\| \quad \forall x \in \mathcal{Y}.
\end{align*}
Therefore, for any $\nabla g(x, \zeta) \in \partial g(x, \zeta)$, we have:
\begin{align*}
\mathbb{E} [\|\nabla F(x, \zeta) \|^2] & =   \mathbb{E} [\|\nabla f(x, \zeta) - \nabla f(\bar{x}, \zeta) + \nabla g(x, \zeta) + \nabla f(\bar{x}, \zeta) \|^2] \\
& \leq 2\mathbb{E}[ \|\nabla f(x, \zeta) - \nabla f(\bar{x}, \zeta)\|^2]  +2\mathbb{E}[ \|\nabla g(x, \zeta) + \nabla f(\bar{x}, \zeta)\|^2] \\
&\leq 4L_f (F(x) - F^*) +  4L_f D \|\nabla F(\bar{x})\| +2\mathbb{E}[ \|\nabla g(x, \zeta)  + \nabla f(\bar{x}, \zeta)\|^2].
\end{align*}
Assuming now  that  the regularization functions $g$ have bounded
subgradients on $\mathcal{Y}$, i.e., $\|\nabla g(x, \zeta)\| \leq B_g$, then  the  bounded gradient condition  \eqref{as:main1_spg} holds  on on $\mathcal{Y}$ with:
\[  L =  4 L_f \quad \text{and} \quad B^2 =  4  \left(B_g^2 +\max_{\bar{x} \in \mathcal{X}^*} \left(\mathbb{E}[ \|\nabla f(\bar{x}, \zeta)\|^2] +  D L_f \|\nabla F(\bar{x})\| \right) \right). \]
\noindent  Note that $B^2$ is finite, since  the optimal set $\mathcal{X}^*$ is compact (recall that in this example $\mathcal{Y}$ is assumed bounded).

\medskip 

\noindent Finally, we also impose some regularity condition  for the constraints.
\begin{assumption}
	\label{assumption4}
	The functional constraints satisfy a regularity condition, i.e., there exists  non-negative constant $c>0$ such that:
	\begin{align}
		\label{eq:constrainterrbound}
		\dist^2(x, \mathcal{X}) \le c \cdot  \mathbb{E} \left[ (h(x,\xi))_+^2 \right] \;\; \forall x \in \blue{\mathcal{Y}}. 
	\end{align}
\end{assumption}

\noindent Note that this assumption  holds e.g., when the index set $\Omega_2$ is arbitrary and the feasible set $\mathcal{X}$ has an interior point, see   \citep{Pol:01}, or when the feasible set is polyhedral,  see  relation (\ref{hoffman}) below. However, Assumption (\ref{assumption4}) holds  for more general sets, e.g., when a strengthened Slater condition holds for the collection of convex functional constraints, such as the generalized Robinson condition, as detailed in Corollary 3 of \citep{LewPan:98}. 

\medskip

%%%%%%%%%%%%%%%%%%%%%%%%%%%%%%%%%%%%%%%

\section{Stochastic  subgradient projection algorithm }
Given the iteration counter $k$, we consider independent  random variables $ \zeta_k$ and $\xi_k$  sampled from $\Omega_1$ and $\Omega_2$ according to probability distributions $\textbf{P}_1$ and $\textbf{P}_2$, respectively.  Then, we define the following  stochastic subgradient projection algorithm, where at each iteration we perform  a stochastic proximal (sub)gradient step aimed at  minimizing the expected composite  objective function and then a subsequent subgradient step  minimizing the feasibility violation of the observed random constraint (we use the convention  $0/0=0$)\footnote{In this  variant we have corrected some derivations from the   journal version:  Journal of Machine Learning Research, 23(265), 1-35, 2022.}:
\begin{center}
	\noindent\fbox{%
		\parbox{12cm}{%
			\textbf{Algorithm 1 (SSP)}:\\
			$\text{Choose} \; x_0 \in \mathcal{Y} \; \text{and stepsizes} \; \alpha_k>0 $  and $\beta \in (0, 2)$\\
			$\text{For} \; k \geq 0 \;  \text{repeat:}$
			\begin{align}
				& \text{Sample independently} \;   \zeta_k \sim \textbf{P}_1 \;\text{and}\; \xi_k\sim \textbf{P}_2  \; \text{and update:} \nonumber  \\
				&\blue{u_k} = \text{prox}_{\alpha_k g(\cdot,\zeta_{k})} \left(x_{k} - \alpha_{k} \nabla f(x_{k}, \zeta_{k} )\right), \quad v_k = \blue{\Pi_\mathcal{Y} (u_k)} \label{eq:algstep1}\\
				&z_k = v_k  - \beta \frac{(h(v_k,\xi_k))_+}{\| \nabla h(v_k,\xi_k) \|^2} \nabla h(v_k,\xi_k) \label{eq:algstep2}\\
				&x_{k+1} = \Pi_{\mathcal{Y}} (z_k).\label{eq:algstep3}
			\end{align}
		}%
	}
\end{center}

\noindent Here, $\alpha_{k}>0$ and $\beta>0$ are deterministic stepsizes and $(x)_+ = \max \{0,x\}$. Note that $v_k$  represents a stochastic proximal (sub)gradient step (or a stochastic  forward-backward step) at $x_k$ for the expected  composite objective function $F(x) = \mathbb{E}[f(x,\zeta) + g(x,\zeta)]$.  Note that the optimality step selects a random pair of functions  $(f(x_k,\zeta_k), g(x_k,\zeta_k))$ from the composite objective function $F$  according to the probability distribution $\textbf{P}_1$, i.e., the index variable $\zeta_k$ with values in the set $\Omega_1$.  Also, we note that the random feasibility step selects a random constraint $ h(\cdot,\xi_k) \leq 0$ from the collection of constraints set according to the probability distribution $\textbf{P}_2$, independently from $\zeta_k$, i.e. the index variable $\xi_k$ with values in the set $\Omega_2$. The vector $\nabla h(v_k, \xi_k)$ is chosen as: 
\[
\nabla h(v_k, \xi_k)  = 
\begin{cases} 
\nabla h(v_k, \xi_k) \in \partial ((h(v_k,\xi_k))_+)  & \mbox{if }  (h(v_k,\xi_k))_+ > 0  \\ 
	s_h  \neq 0 & \mbox{if } (h(v_k,\xi_k))_+ = 0, 
\end{cases}
\] 
where $s_h \in \mathbb{R}^n$ is any nonzero vector.   If $(h(v_k,\xi_k))_+ = 0$,  then  for any choice of nonzero  $s_h$, we have $z_k = v_k$. Note that the feasibility step \eqref{eq:algstep2} has the special form of the Polyak's subgradient iteration, see e.g.,  \citep{Pol:69}.  Moreover, when $\beta = 1$, $z_k$ is the projection of  $v_k$ onto the  hyperplane: 
$$\mathcal{H}_{v_k,\xi_k} =\{ z: \;  h(v_k,\xi_k) + \nabla h(v_k,\xi_k)^T(z - v_k) \leq 0  \}, $$ that is  $z_k = \Pi_{\mathcal{H}_{v_k,\xi_k}} (v_k)$ for $\beta = 1$.   Indeed,  if $v_k \in \mathcal{H}_{v_k, \xi_k}$, then  $(h(v_k, \xi_{k}))_+ = 0$ and the projection is the point itself, i.e., $z_k = v_k$.  On the other hand, if $v_k \notin \mathcal{H}_{v_k, \xi_k}$, then the projection of $v_k$ onto $\mathcal{H}_{v_k, \xi_k}$ reduces to the projection onto the corresponding hyperplane:
\begin{align*}
	z_k = v_k  - \frac{h(v_k,\xi_k)}{\| \nabla h(v_k,\xi_k) \|^2} \nabla h(v_k,\xi_k).
\end{align*} 
Combining these two cases, we finally get our update \eqref{eq:algstep2} .  Note that when the feasible set of optimization  problem \eqref{eq:prob} is described by (infinite) intersection of simple convex sets, see e.g. \citep{PatNec:17,BiaHac:17, Xu:18, WanChe:15}: 
$$  \mathcal{X} =  \mathcal{Y}  \cap  \left(\cap_{\xi  \in \Omega_2} \mathcal{X}_\xi  \right),   $$
where each set $X_\xi$ admits an easy  projection, then one can choose the following functional representation  in problem \eqref{eq:prob}:  
$$ h(x,\xi) = (h(x,\xi))_+ =  \dist (x, \mathcal{X}_\xi)      \quad \forall \xi \in \Omega_2.   $$ 
 One can easily notice that this function is convex, nonsmooth and with bounded subgradients, since we have \citep{MorNam:05}: 
\[   \frac{ x - \Pi_{\mathcal{X}_\xi}(x) }{\|  x - \Pi_{\mathcal{X}_\xi}(x)  \|} \in \partial  h(x,\xi).   \]
In this case,  step \eqref{eq:algstep2} in SSP algorithm becomes a usual  random projection step:
\[ z_k = v_k - \beta(v_k  - \Pi_{\mathcal{X}_{\xi_{k}}}(v_k)).   \]  
Hence, our formulation is more general than \citep{PatNec:17,BiaHac:17, Xu:18, WanChe:15}, as it allows to also deal  with constraints that might not be projectable, but one can easily compute a subgradient of $h$.   We mention also the possibility of performing  iterations in parallel in steps \eqref{eq:algstep1} and \eqref{eq:algstep2} of SSP algorithm. However, here we do not consider this case. A thorough convergence analysis of minibatch iterations when the objective function is deterministic and strongly convex can be found in \citep{AngNec:19}.   For the analysis of SSP algorithm, let us define the stochastic (sub)gradient mapping (for simplicity we omit its dependence on stepsize~$\alpha$):
\[  \mathcal{S}(x, \zeta) = \alpha^{-1} \left( x - \prox_{\alpha g(\cdot,\zeta)}(x - \alpha {\nabla} f(x, \zeta)) \right). \] Then, it follows immediately that   the  stochastic proximal (sub)gradient step \eqref{eq:algstep1} can  be written as:
\[ \blue{u_{k}} = x_k - \alpha_k \mathcal{S}(x_k, \zeta_k).  \]
Moreover, from  the optimality condition of the prox operator it follows that  there exists
$\nabla g(\blue{u_{k}},\zeta_{k}) \in \partial g(\blue{u_{k}},\zeta_k)$ such that:
\[  \mathcal{S}(x_k, \zeta_k) =  \nabla f(x_k, \zeta_k)  + \nabla g(\blue{u_{k}},\zeta_{k}).  \]
Let us also recall a basic property of the projection onto a  closed convex set $\mathcal{X} \subseteq \mathbb{R}^n$, see e.g.,  \citep{AngNec:19}:
\begin{align}\label{proj_prop}
	\|\Pi_{\mathcal{X}}(v) - y\|^2 \leq \|v - y\|^2 - \|\Pi_{\mathcal{X}} (v)-v\|^2 \qquad \forall  v \in \mathbb{R}^n\; \text{and}\; y \in \mathcal{X}. 
\end{align} 
Define also  the filtration: 
\[  \mathcal{F}_{[k]}= \{ \zeta_0,\cdots,\zeta_k,  \;\;  \xi_0,\cdots,\xi_k  \}.   \] 
The next lemma provides a key  descent property for  the sequence $v_k$ and for the  proof we use as main tool the stochastic  (sub)gradient mapping~$\mathcal{S}(\cdot)$.

\begin{lemma}
	\label{th:spg_basic} 
	Let  $f(\cdot,\zeta)$ and $g(\cdot,\zeta)$ be  convex functions.  Additionally,  assume that the bounded gradient condition from   Assumption \ref{assumption1} holds. Then, for any $k \geq 0$ and  stepsize $\alpha_k >0$, we have the following recursion:
	\begin{align}
		\label{spg_basic} 
		&\mathbb{E}[\|v_{k} - \bar{v}_{k} \|^2] 
		 \leq \mathbb{E}[\| x_k - \bar{x}_{k} \|^2] -  \alpha_k  (2 - \alpha_k L) \, \mathbb{E}[ F(x_k) - F(\bar{x}_{k}) ] + \alpha_k^2 B^2.   %\nonumber
	\end{align}
\end{lemma}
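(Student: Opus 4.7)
The plan starts with a projection reduction: since $\bar v_k=\Pi_{\mathcal{X}^*}(v_k)$ and $\bar x_k\in\mathcal{X}^*$, the definition of projection onto the optimal set gives $\|v_k-\bar v_k\|^2\le\|v_k-\bar x_k\|^2$, so it suffices to bound $\mathbb{E}[\|v_k-\bar x_k\|^2]$ by the right-hand side.

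For this I would start from the variational characterization of the proximal step. Because $v_k$ minimizes the $\tfrac{1}{\alpha_k}$-strongly convex map $y\mapsto g(y,\zeta_k)+\tfrac{1}{2\alpha_k}\|y-x_k+\alpha_k\nabla f(x_k,\zeta_k)\|^2$, the strong-convexity inequality at $y=\bar x_k$, after expanding the two affine-shifted quadratics and cancelling the $\alpha_k^2\|\nabla f(x_k,\zeta_k)\|^2$ contributions, yields an estimate of the form
\[
\|v_k-\bar x_k\|^2 \le \|x_k-\bar x_k\|^2 - \|v_k-x_k\|^2 + 2\alpha_k[g(\bar x_k,\zeta_k)-g(v_k,\zeta_k)] + 2\alpha_k\langle\nabla f(x_k,\zeta_k),\bar x_k-v_k\rangle.
\]
Splitting $\bar x_k-v_k=(\bar x_k-x_k)+(x_k-v_k)$ and using convexity of $f(\cdot,\zeta_k)$ at $x_k$ to dominate the $\bar x_k-x_k$ piece by $f(\bar x_k,\zeta_k)-f(x_k,\zeta_k)$ absorbs that contribution into a function-value gap.

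The pivotal manipulation is to trade $-g(v_k,\zeta_k)$ for $-g(x_k,\zeta_k)$ so that a true subgradient of $F(\cdot,\zeta_k)$ at $x_k$ — the object bounded by Assumption~\ref{assumption1} — ends up appearing. For any $\nabla g(x_k,\zeta_k)\in\partial g(x_k,\zeta_k)$, convexity of $g(\cdot,\zeta_k)$ gives $-g(v_k,\zeta_k)\le-g(x_k,\zeta_k)+\langle\nabla g(x_k,\zeta_k),x_k-v_k\rangle$, and combining this with the residual $\nabla f(x_k,\zeta_k)$ inner-product term collapses both into $2\alpha_k\langle\nabla F(x_k,\zeta_k),x_k-v_k\rangle$, where $\nabla F(x_k,\zeta_k):=\nabla f(x_k,\zeta_k)+\nabla g(x_k,\zeta_k)\in\partial F(x_k,\zeta_k)$. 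A unit-constant Young split $2\alpha_k\langle\nabla F(x_k,\zeta_k),x_k-v_k\rangle\le\|v_k-x_k\|^2+\alpha_k^2\|\nabla F(x_k,\zeta_k)\|^2$ exactly cancels the $-\|v_k-x_k\|^2$ term, leaving the clean SGD-style bound
\[
\|v_k-\bar x_k\|^2 \le \|x_k-\bar x_k\|^2 - 2\alpha_k[F(x_k,\zeta_k)-F(\bar x_k,\zeta_k)] + \alpha_k^2\|\nabla F(x_k,\zeta_k)\|^2.
\]

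To finish, I would take full expectation: the iterates $x_k,\bar x_k$ are $\mathcal{F}_{[k-1]}$-measurable, so $\mathbb{E}[F(\cdot,\zeta_k)\mid\mathcal{F}_{[k-1]}]=F(\cdot)$ at those points and $F(\bar x_k)=F^*$, while Assumption~\ref{assumption1} gives $\mathbb{E}[\|\nabla F(x_k,\zeta_k)\|^2]\le B^2+L\,\mathbb{E}[F(x_k)-F^*]$. Grouping the two $\mathbb{E}[F(x_k)-F(\bar x_k)]$ contributions delivers the $\alpha_k(2-\alpha_k L)$ coefficient and the stated recursion. The step I expect to be most delicate is precisely the subgradient identification described above: the prox step naturally produces $\nabla g$ evaluated at the post-step point $v_k$, while Assumption~\ref{assumption1} controls subgradients of $F$ at the pre-step point $x_k$, and convexity of $g$ at $x_k$ is what bridges these two evaluation points without introducing any extra $\|v_k-x_k\|^2$ residual that would not cancel.
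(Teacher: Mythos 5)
Your proposal is correct and follows essentially the same route as the paper's proof: the same reduction $\|v_k-\bar v_k\|^2\le\|v_k-\bar x_k\|^2$ via the projection property, the same bridging of $g$ from the post-prox point $v_k$ back to $x_k$ by convexity so that a subgradient $\nabla F(x_k,\zeta_k)\in\partial F(x_k,\zeta_k)$ appears, the same Young-type cancellation of $\|v_k-x_k\|^2$, and the same conditioning step invoking Assumption \ref{assumption1} to produce the $\alpha_k(2-\alpha_k L)$ coefficient. The only cosmetic difference is that you package the prox step through the strong convexity (three-point) inequality of the proximal objective, whereas the paper expands $v_k=x_k-\alpha_k\mathcal{S}(x_k,\zeta_k)$ and uses the prox optimality condition $\mathcal{S}(x_k,\zeta_k)=\nabla f(x_k,\zeta_k)+\nabla g(v_k,\zeta_k)$; the two devices yield the identical intermediate bound.
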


\begin{proof}
Recalling that $\bar{v}_k =\Pi_{\mathcal{X}^*}(v_k)$ and $\bar{x}_k = \Pi_{\mathcal{X}^*}(x_k)$,  from the definition of $\blue{u_k}$ and using \eqref{proj_prop} for $y = \bar{x}_k \in \mathcal{X^*} \blue{\subseteq \mathcal{Y}}$ and $v = v_k$, we get:
	\begin{align}
		\label{der_spg_basic}
		 \| v_{k} - \bar{v}_{k} \|^2 & \overset{\eqref{proj_prop}}{\le}   \| v_{k} - \bar{x}_{k} \|^2 \blue{ =\| \Pi_\mathcal{Y} (u_{k}) - \Pi_\mathcal{Y}( \bar{x}_{k}) \|^2 \le \| u_{k} - \bar{x}_{k} \|^2} =  \| x_{k} - \bar{x}_{k} - \alpha_k \mathcal{S}(x_k, \zeta_k)\|^2 \nonumber\\
		& =  \| x_{k} - \bar{x}_{k} \|^2  - 2 \alpha_k \langle \mathcal{S}(x_k, \zeta_k), x_k  - \bar{x}_{k} \rangle + \alpha_k^2 \| \mathcal{S}(x_k, \zeta_k) \|^2   \nonumber \\
		& = \| x_{k} - \bar{x}_{k} \|^2  - 2 \alpha_k \langle \nabla
		f(x_k, \zeta_k) + \nabla g(\blue{u_{k}},\zeta_{k}), x_k  - \bar{x}_{k} \rangle + \alpha_k^2 \| \mathcal{S}(x_k, \zeta_k) \|^2. 
	\end{align}
	Now, we refine the second term. First, from convexity of $f$ we
	have:
	\[ \langle \nabla f(x_k, \zeta_k), x_k  - \bar{x}_{k} \rangle \geq f(x_k, \zeta_k) - f(\bar{x}_{k}, \zeta_k). \]
	Then, from  convexity of $g(\cdot,\zeta)$ and the definition of  the 
	gradient mapping $\mathcal{S}(\cdot, \zeta)$, we have:
	\begin{align*}
		&\langle \nabla g(\blue{u_{k}},\zeta_{k}), x_k  - \bar{x}_{k} \rangle  = \langle  \nabla g(\blue{u_{k}},\zeta_{k}), x_k  - \blue{u_{k}} \rangle + \langle  \nabla g(\blue{u_{k}},\zeta_{k}), \blue{u_{k}} - \bar{x}_{k} \rangle \\
		& \geq \alpha_k \| \mathcal{S}(x_k, \zeta_k) \|^2 - \alpha_k \langle \nabla f(x_k, \zeta_k), \mathcal{S}(x_k, \zeta_k) \rangle + g(\blue{u_{k}},\zeta_{k}) - g(\bar{x}_k,\zeta_{k})\\
		& \geq \alpha_k \| \mathcal{S}(x_k, \zeta_k) \|^2 - \alpha_k \langle
		\nabla f(x_k, \zeta_k) + \nabla g(x_k, \zeta_{k}), \mathcal{S}(x_k, \zeta_k) \rangle  + g(x_{k},\zeta_{k}) - g(\bar{x}_k,\zeta_{k}).
	\end{align*}
	Replacing the previous two inequalities in \eqref{der_spg_basic}, we
	obtain:
	\begin{align*}
		\| v_{k} - \bar{v}_{k} \|^2 & \leq  \| x_{k} - \bar{x}_{k} \|^2
		- 2 \alpha_k  \left( f(x_k, \zeta_k) + g(x_k,\zeta_{k}) - f(\bar{x}_{k}, \zeta_k) - g(\bar{x}_{k},\zeta_{k})\right)  \\
		& \quad + 2 \alpha_k^2 \langle \nabla f(x_k, \zeta_k) + \nabla g(x_k,\zeta_{k}), \mathcal{S}(x_k, \zeta_k) \rangle - \alpha_k^2 \|
		\mathcal{S}(x_k, \zeta_k) \|^2.
	\end{align*}
Using that  $2\langle u , v \rangle - \|v\|^2 \leq \|u\|^2$ for
	all $v \in \rset^n$, that $ F(x_k, \zeta_k)  =  f(x_k, \zeta_k) +  g(x_k,\zeta_{k})$ and $\nabla F(x_k, \zeta_k)  = \nabla f(x_k, \zeta_k) + \nabla g(x_k,\zeta_{k}) \in \partial F(x_k, \zeta_k)$, we further get:
	\begin{align*}
		\| v_{k} - \bar{v}_{k} \|^2 & \leq \| x_{k} - \bar{x}_{k} \|^2
		- 2 \alpha_k \left(  F(x_k, \zeta_k)  - F(\bar{x}_{k}, \zeta_k) \right)  +  \alpha_k^2 \| \nabla F(x_k, \zeta_k)\|^2.
	\end{align*}
Since 	$v_{k}$ depends  on $\mathcal{F}_{[k-1]} \cup \{\zeta_k\}$, not on $\xi_k$, and the stepsize $\alpha_k$ does not  depend on $(\zeta_k,\xi_k)$, then from basic properties of conditional expectation, we have:
	\begin{align*}
		&\mathbb{E}_{ \zeta_{k} } [\| v_{k} - \bar{v}_{k} \|^2 | \mathcal{F}_{[k-1]} ] \\ 
		& \leq \| x_{k} - \bar{x}_{k} \|^2
		- 2 \alpha_k  \mathbb{E}_{\zeta_{k}}[  F(x_k, \zeta_k)  - F(\bar{x}_{k}, \zeta_k)   | \mathcal{F}_{[k-1]} ]  +  \alpha_k^2 \mathbb{E}_{\zeta_{k}}[\| \nabla F(x_k, \zeta_k)\|^2 | \mathcal{F}_{[k-1]} ] \\ 
		& = \| x_{k} - \bar{x}_{k} \|^2 - 2 \alpha_k  \left(  F(x_k)  - F(\bar{x}_{k}) \right) +  \alpha_k^2 \mathbb{E}_{\zeta_{k}}[\| \nabla F(x_k, \zeta_k)\|^2  | \mathcal{F}_{[k-1]}]  \\
		& \leq  \| x_{k} - \bar{x}_{k} \|^2 - 2 \alpha_k  \left(  F(x_k)  - F(\bar{x}_{k}) \right)  + \alpha_k^2 \left( B^2 + L \left( F(x_k) - F(\bar{x}_{k})\right) \right) \\
		& =  \|x_{k} - \bar{x}_{k} \|^2  - \alpha_k (2 - \alpha_k L)  \left( F(x_k) -
		F(\bar{x}_{k})\right) + \alpha_k^2 B^2,
	\end{align*}
where in the last inequality we used  $x_k \in \mathcal{Y}$ and the  stochastic bounded gradient inequality from  Assumption \ref{assumption1}.  	Now,  taking  expectation w.r.t.  $\mathcal{F}_{[k-1]} $ we get: 
	\begin{align*}
		&\mathbb{E}[\|v_{k} - \bar{v}_{k} \|^2]  \leq \mathbb{E}[\| x_k - \bar{x}_{k} \|^2] -  \alpha_k  (2 - \alpha_k L) \mathbb{E}[( F(x_k) - F(\bar{x}_{k}) )] + \alpha_k^2 B^2.
%		& \red{\le \mathbb{E}[\| x_k - \bar{x}_{k} \|^2] -  \alpha_k  (2 - \alpha_k L) \mathbb{E}[\langle \nabla F(\bar{x}_{k}), x_k - \bar{x}_{k}\rangle] - \frac{\mu}{2}\alpha_k  (2 - \alpha_k L) \mathbb{E}[ \|x_k - \bar{x}_{k}\|^2 ] + \alpha_k^2 B^2}, 
	\end{align*}
which concludes our statement. 	
\end{proof}

\vspace{-0.2cm}

\noindent Next lemma gives a relation between $x_k$ and $v_{k-1}$ (see also~\citep{AngNec:19}). 

\begin{lemma}
\label{lem:distanyy}
Let  $h(\cdot,\xi)$  be  convex functions.	Additionally, Assumption \ref{assumption3} holds.   Then,  for any $y \in  \mathcal{Y}$ such that $(h(y,\xi_{k-1}))_+ = 0$ the following relation holds:
	\begin{align*}
		\|x_{k}-y\|^2\le \|v_{k-1}-y\|^2-\beta(2-\beta) \left[ \frac{(h(v_{k-1},\xi_{k-1}))_+^2}{B^2_h} \right].
	\end{align*}
\end{lemma}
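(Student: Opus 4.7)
The plan is to trace the update $x_k = \Pi_{\mathcal{Y}}(z_{k-1})$ backwards through the two subgradient steps, using the non-expansiveness of the projection onto $\mathcal{Y}$ first, and then expanding the quadratic induced by the Polyak-style step.

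First, since $y \in \mathcal{Y}$, I would apply property \eqref{proj_prop} to the projection $x_k = \Pi_{\mathcal{Y}}(z_{k-1})$ to obtain $\|x_k - y\|^2 \le \|z_{k-1}-y\|^2$. It then suffices to bound $\|z_{k-1}-y\|^2$. Using the definition \eqref{eq:algstep2}, for the nontrivial case $(h(v_{k-1},\xi_{k-1}))_+ > 0$, one expands
\begin{align*}
\|z_{k-1}-y\|^2 &= \|v_{k-1} - y\|^2 - 2\beta \frac{(h(v_{k-1},\xi_{k-1}))_+}{\|\nabla h(v_{k-1},\xi_{k-1})\|^2} \langle \nabla h(v_{k-1},\xi_{k-1}),\, v_{k-1}-y\rangle \\
&\quad + \beta^2 \frac{(h(v_{k-1},\xi_{k-1}))_+^2}{\|\nabla h(v_{k-1},\xi_{k-1})\|^2}.
\end{align*}

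Next, I would exploit convexity of $h(\cdot,\xi_{k-1})$ together with the hypothesis $(h(y,\xi_{k-1}))_+=0$ (hence $h(y,\xi_{k-1}) \le 0$) to lower bound the inner product:
\begin{align*}
\langle \nabla h(v_{k-1},\xi_{k-1}),\, v_{k-1}-y\rangle \ge h(v_{k-1},\xi_{k-1}) - h(y,\xi_{k-1}) \ge (h(v_{k-1},\xi_{k-1}))_+,
\end{align*}
where the last step uses that in this case $h(v_{k-1},\xi_{k-1}) = (h(v_{k-1},\xi_{k-1}))_+ > 0$. Substituting this lower bound collapses the cross and square terms into
\begin{align*}
\|z_{k-1}-y\|^2 \le \|v_{k-1}-y\|^2 - \beta(2-\beta)\frac{(h(v_{k-1},\xi_{k-1}))_+^2}{\|\nabla h(v_{k-1},\xi_{k-1})\|^2}.
\end{align*}
Since $\beta \in (0,2)$ makes $\beta(2-\beta)>0$, I then invoke Assumption \ref{assumption3}, namely $\|\nabla h(v_{k-1},\xi_{k-1})\| \le B_h$, to replace the denominator by $B_h^2$ and obtain the claimed inequality.

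Finally, the degenerate case $(h(v_{k-1},\xi_{k-1}))_+=0$ has to be handled with the convention $0/0 = 0$ adopted in the algorithm: in that case $z_{k-1} = v_{k-1}$ and the right-hand side of the target inequality reduces to $\|v_{k-1}-y\|^2$, so the bound holds with equality. The only subtle point, which I would highlight carefully, is this case distinction together with the identification $h(v_{k-1},\xi_{k-1}) = (h(v_{k-1},\xi_{k-1}))_+$ whenever the feasibility step is active; otherwise the proof is a direct one-line expansion followed by the convexity bound.
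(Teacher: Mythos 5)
Your proposal is correct and follows essentially the same route as the paper: nonexpansiveness of $\Pi_{\mathcal{Y}}$, expansion of the square from step \eqref{eq:algstep2}, a convexity lower bound on the cross term, and finally Assumption \ref{assumption3} to replace $\|\nabla h(v_{k-1},\xi_{k-1})\|^2$ by $B_h^2$. The only cosmetic difference is that the paper bounds the inner product via convexity of $(h(\cdot,\xi))_+$ directly (using $(h(y,\xi_{k-1}))_+=0$), whereas you use convexity of $h$ together with $h(y,\xi_{k-1})\le 0$ and note that the chosen vector lies in $\partial\bigl((h(v_{k-1},\xi_{k-1}))_+\bigr)=\partial h(v_{k-1},\xi_{k-1})$ when the step is active; your explicit treatment of the degenerate case $(h(v_{k-1},\xi_{k-1}))_+=0$ is a fine addition.
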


\begin{proof}
Consider any $y \in \mathcal{Y}$ such that $(h(y,\xi_{k-1}))_+ = 0$. Then,  using the nonexpansive  property of the projection and the definition of $z_{k-1}$, we have:
	\begin{align*}
		\|x_k - y\|^2& = \|\Pi_{\mathcal{Y}}(z_{k-1}) -y\|^2  \leq\|z_{k-1}-y\|^2\\
		& =\|v_{k-1} -y - \beta \frac{(h(v_{k-1},\xi_{k-1}))_+}{\|\nabla h(v_{k-1},\xi_{k-1})\|^2} \nabla h(v_{k-1},\xi_{k-1})\|^2\\
		& = \|v_{k-1}-y\|^2 + \beta^2 \frac{(h(v_{k-1},\xi_{k-1}))_+^2}{\|\nabla h(v_{k-1},\xi_{k-1})\|^2}\\
		& \quad -2 \beta \frac{(h(v_{k-1},\xi_{k-1}))_+}{\|\nabla h(v_{k-1},\xi_{k-1})\|^2}\langle v_{k-1}-y, \nabla h(v_{k-1},\xi_{k-1})\rangle\\
		& \leq \|v_{k-1}-y\|^2 + \beta^2 \frac{(h(v_{k-1},\xi_{k-1}))_+^2}{\|\nabla h(v_{k-1},\xi_{k-1})\|^2} -2 \beta \frac{(h(v_{k-1},\xi_{k-1}))_+}{\|\nabla h(v_{k-1},\xi_{k-1})\|^2} (h(v_{k-1},\xi_{k-1}))_+,
	\end{align*}
where the last inequality follows from  convexity of $(h(\cdot, \xi))_+$ and our assumption that $(h(y,\xi_{k-1}))_+ = 0$. After rearranging the terms and using that $v_{k-1} \in \mathcal{Y}$, we get:
	\begin{align*}
		\|x_k -y\|^2 &\leq \|v_{k-1}-y\|^2 - \beta(2-\beta)\left[ \frac{(h(v_{k-1},\xi_{k-1}))_+^2}{\|\nabla h(v_{k-1},\xi_{k-1})\|^2}\right] \\
		& \overset{\text{Assumption} \, \ref{assumption3}}{\leq} \|v_{k-1}-y\|^2 - \beta(2-\beta) \left[ \frac{(h(v_{k-1},\xi_{k-1}))_+^2}{B_h^2}\right],
	\end{align*}
which concludes our statement. 
\end{proof}
In the next sections, based on  the previous two lemmas,  we derive convergence rates for SSP algorithm depending on the (convexity) properties of $f(\cdot,\zeta)$.

%%%%%%%%%%%%%%%%%%%%%

\subsection{Convergence analysis: convex objective function}
\noindent In this section we consider that the functions $f(\cdot,\zeta), g(\cdot,\zeta)$ and $h(\cdot,\xi)$ are convex.  First, it is easy to prove that $c B_h^2 > 1$ (we can always choose $c$ sufficiently large such that this relation holds), see also \citep{AngNec:19}. For simplicity of the exposition let us introduce the following  notation: 
\[ C_{\beta,c,B_h} := \frac{ \beta(2-\beta)}{c B^2_h}   \left( 1 -   \frac{\beta(2-\beta) }{c B^2_h} \right)^{-1} >0.  \] 
\red{We impose the following conditions on the stepsize $\alpha_k$:
\begin{align}
\label{eq:alk}
0 < \alpha_k \leq \alpha_k(2-\alpha_k L) <1 \;\; \iff \;\; 	 \alpha_k \in 
\begin{cases}
	\left(0, \frac{1}{2} \right) \;\;  \text{if} \; L =0  \\
	\left(0, \frac{1- \sqrt{(1-L)_+}}{L} \right) \;\;  \text{if} \; L > 0.
\end{cases} 
\end{align}	 
}
Then, we can define the following  average sequence generated by the algorithm SSP: 
\[  \hat{x}_k = \frac{\sum_{j=1}^{k} \alpha_j \red{(2 -\alpha_j L)} x_j}{S_k}, \quad  \text{where}  \; S_k = \sum_{j=1}^{k} \alpha_j \red{(2 -\alpha_j L)}.   \] 
\red{Note that this type of average sequence is also consider in \citep{GarGow:23} for unconstrained stochastic optimization problems.} The next theorem derives sublinear  convergence rates for the average sequence  $\hat{x}_k$. 
\begin{theorem}
\label{th:nonstrongconv}
Let  $f(\cdot,\zeta), g(\cdot,\zeta)$ and  $h(\cdot,\xi)$  be  convex functions. Additionally,   Assumptions \ref{assumption1},  \ref{assumption3} and \ref{assumption4} hold.  Further, choose  the stepsize sequence  \red{$\alpha_k$  as in \eqref{eq:alk}}  and  stepsize  $\beta \in (0, 2)$. Then, we have the following estimates  for the average sequence  $\hat{x}_k$ in terms of optimality and feasibility violation for problem \eqref{eq:prob}: 
	\begin{align*}
		& \mathbb{E}\left[  F(\hat{x}_k ) - F^*  \right] \leq    \frac{\| v_0 - \overline{v}_{0} \|^2}{ S_k }  +  \frac{B^2 \sum_{j=1}^{k} \alpha_j^2 }{ S_k }, \\
		& \mathbb{E}\left[ \dist^2(\hat{x}_k, \mathcal{X}) \right]  \leq    \frac{ \| v_0 - \overline{v}_{0} \|^2}{ C_{\beta,c,B_h} \cdot  S_k }  +  \frac{ B^2  \sum_{j=1}^{k} \alpha_j^2}{C_{\beta,c,B_h} \cdot  S_k}.
	\end{align*}  
\end{theorem}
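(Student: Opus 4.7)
The plan is to chain Lemma \ref{th:spg_basic} with Lemma \ref{lem:distanyy}, invoke the regularity condition \eqref{eq:constrainterrbound}, telescope, and finally apply Jensen's inequality to the weighted average $\hat{x}_k$.

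First, I would apply Lemma \ref{lem:distanyy} with $y = \bar{v}_k = \Pi_{\mathcal{X}^*}(v_k) \in \mathcal{X}$, for which $(h(\bar{v}_k,\xi_k))_+ = 0$, and use $\|x_{k+1} - \bar{x}_{k+1}\|^2 \leq \|x_{k+1} - \bar{v}_k\|^2$ since $\bar{v}_k \in \mathcal{X}^*$. Taking expectation over $\xi_k$ and invoking Assumption \ref{assumption4} yields the descent
$$\mathbb{E}\!\left[\|x_{k+1} - \bar{x}_{k+1}\|^2 \mid \mathcal{F}_{[k-1]},\zeta_k\right] \leq \|v_k - \bar{v}_k\|^2 - \frac{\beta(2-\beta)}{cB_h^2}\,\dist^2(v_k, \mathcal{X}).$$
Combining this with Lemma \ref{th:spg_basic}, taking full expectation, and telescoping from $j = 1$ to $k$ (with base case $\mathbb{E}[\|x_1 - \bar{x}_1\|^2]\leq \|v_0 - \bar{v}_0\|^2$), two useful inequalities emerge by dropping the nonnegative slack terms one at a time:
\begin{align*}
\sum_{j=1}^k \alpha_j(2-\alpha_j L)\,\mathbb{E}[F(x_j) - F^*] &\;\leq\; \|v_0 - \bar{v}_0\|^2 + B^2 \sum_{j=1}^k \alpha_j^2,\\
q\,\sum_{j=1}^k \mathbb{E}[\dist^2(v_j, \mathcal{X})] &\;\leq\; \|v_0 - \bar{v}_0\|^2 + B^2 \sum_{j=1}^k \alpha_j^2,
\end{align*}
where $q := \beta(2-\beta)/(cB_h^2) \in (0,1)$ since $cB_h^2 > 1$ and $\beta \in (0,2)$.

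The first inequality, together with convexity of $F$ and Jensen's inequality for the weighted average $\hat{x}_k$, yields the optimality bound directly. For the feasibility bound, I would reapply Lemma \ref{lem:distanyy} but with $y = \Pi_{\mathcal{X}}(v_k) \in \mathcal{X}$ and Assumption \ref{assumption4} to obtain the one-step feasibility contraction $\mathbb{E}[\dist^2(x_{k+1},\mathcal{X}) \mid v_k] \leq (1-q)\,\dist^2(v_k, \mathcal{X})$. Next, using convexity of $\dist^2(\cdot,\mathcal{X})$ to Jensen-bound $\dist^2(\hat{x}_k,\mathcal{X})$ by the weighted average of $\dist^2(x_j,\mathcal{X})$, absorbing the weights via $\alpha_j(2-\alpha_j L) \leq 1$ from \eqref{eq:alk}, applying the contraction to replace $\dist^2(x_j,\mathcal{X})$ by $(1-q)\,\dist^2(v_{j-1},\mathcal{X})$, and invoking the second telescoped bound above, the feasibility estimate follows with $C_{\beta,c,B_h} = q/(1-q)$.

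The main obstacle I anticipate is the bookkeeping of the constant: recognizing that $C_{\beta,c,B_h}$ appears precisely as the product of the contraction deficit $(1-q)$ from the feasibility chain and the factor $q^{-1}$ from the telescoped bound on $\sum_j \mathbb{E}[\dist^2(v_j,\mathcal{X})]$, together with careful handling of the base case where $v_0$ enters the feasibility chain directly rather than through the contraction from an earlier iterate. The stepsize condition \eqref{eq:alk} plays a dual role here: it keeps the coefficient $\alpha_j(2-\alpha_j L)$ strictly positive (so the optimality term can be dropped or used as desired) and bounded above by $1$ (so it can be absorbed when passing from $\hat{x}_k$-weights to unit weights on the $\dist^2$ sums).
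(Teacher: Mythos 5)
Your proposal is correct and follows essentially the same route as the paper: Lemma \ref{th:spg_basic} combined with two applications of Lemma \ref{lem:distanyy} (at $y=\bar{v}_{k-1}$ and at $y=\Pi_{\mathcal{X}}(v_{k-1})$) together with Assumption \ref{assumption4}, telescoping, the bound $\alpha_j(2-\alpha_j L)\le 1$ from \eqref{eq:alk}, and Jensen on the weighted averages, yielding exactly the constant $C_{\beta,c,B_h}=q/(1-q)$. The only difference is bookkeeping order — you telescope first and apply the contraction $\mathbb{E}[\dist^2(x_{j},\mathcal{X})]\le(1-q)\mathbb{E}[\dist^2(v_{j-1},\mathcal{X})]$ afterwards, whereas the paper substitutes it into the per-iteration recursion before summing — which, together with your minor index shift in the telescoped distance sum (it runs over $v_0,\dots,v_{k-1}$), does not affect the argument.
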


\begin{proof}
Recall that from Lemma \ref{th:spg_basic}, we have:
\begin{align}
	\label{eq:central}
& \mathbb{E}\left[\|v_{k} - \bar{v}_{k} \|^2\right]  \leq \mathbb{E}\left[\| x_k - \bar{x}_{k} \|^2\right] -  \alpha_k  (2 - \alpha_k L) \mathbb{E}\left[ F(x_k) - F(\bar{x}_{k}) \right] + \alpha_k^2 B^2
%&\leq \mathbb{E}\left[\| x_k - \bar{x}_{k} \|^2\right] -  \alpha_k  (2 - \alpha_k L) \mathbb{E}\left[ \langle \nabla F(\bar{x}_{k}), x_k - \bar{x}_k \rangle \right] + \alpha_k^2 B^2\nonumber\\
%& = \mathbb{E}\left[\| x_k - \bar{x}_{k} \|^2\right] -  \alpha_k  (2 - \alpha_k L) \mathbb{E}\left[ \langle \nabla F(\bar{x}_{k}), x_k - \Pi_\mathcal{X} (x_k) +\Pi_\mathcal{X} (x_k) - \bar{x}_k \rangle \right] + \alpha_k^2 B^2\nonumber\\
%& \le \mathbb{E}\left[\| x_k - \bar{x}_{k} \|^2\right] -  \alpha_k  (2 - \alpha_k L) \mathbb{E}\left[ \langle \nabla F(\bar{x}_{k}), x_k - \Pi_\mathcal{X} (x_k) \rangle \right] + \alpha_k^2 B^2\nonumber\\
%& \le \mathbb{E}\left[\| x_k - \bar{x}_{k} \|^2\right] + \frac{\eta}{2} \mathbb{E}[\| x_k - \Pi_\mathcal{X} (x_k)\|^2] +  \frac{\alpha^2_k  (2 - \alpha_k L)^2}{2\eta} \mathbb{E}\left[ \|\nabla F(\bar{x}_{k})\| \right] + \alpha_k^2 B^2\nonumber\\
%& \overset{\eqref{as:main1_spg}}{\le} \mathbb{E}\left[\| x_k - \bar{x}_{k} \|^2\right] + \frac{\eta}{2} \mathbb{E}[\| x_k - \Pi_\mathcal{X} (x_k)\|^2] +  \left( 1 + \frac{4}{\eta} \right) \alpha_k^2 B^2,
\end{align}
% where the second inequality uses the convexity of $F$, the third inequality uses the optimality condition $\langle \nabla F(\bar{x}_{k}), \Pi_\mathcal{X} (x_k) - \bar{x}_k \rangle \ge 0$ and the fact that $\alpha_k \in \left( 0, \frac{2}{L}\right]$, the fourth inequality uses the identity $\langle a, b\rangle \ge -\frac{1}{2\eta} \|a\|^2 - \frac{\eta}{2} \|b\|^2$ for any $a,b \in \mathbb{R}^n, \eta >0$, and the final inequality uses the fact that $\alpha_k \in \left( 0, \frac{2}{L}\right]$ implies that $(2 - \alpha_k L)^2 = 4 + \alpha_{k}^2 L^2 - 4 \alpha_k L \le 8$.}
Now, for  $y=\bar{v}_{k-1} \in \mathcal{X}^* \subseteq \mathcal{X} \subseteq \mathcal{Y}$ we have that  $(h(\bar{v}_{k-1},\xi_{k-1}))_+ = 0$, and thus using Lemma~\ref{lem:distanyy},  we get: 
	\begin{align*}
		\|x_k - \bar{x}_k\|^2  \overset{\eqref{proj_prop}}{\le} \|x_{k}-\bar{v}_{k-1}\|^2 \leq \|v_{k-1} -\bar{v}_{k-1}\|^2 - \beta(2-\beta) \left[\frac{(h(v_{k-1},\xi_{k-1}))_+^2}{B^2_h}\right].
	\end{align*}
	Taking conditional expectation on  $\xi_{k-1}$ given  $\mathcal{F}_{[k-1]} $, we get:
	\begin{align*}
		\mathbb{E}_{\xi_{k-1}} [\|x_k - \bar{x}_k\|^2 |  \mathcal{F}_{[k-1]} ] &\leq \|v_{k-1} -\bar{v}_{k-1}\|^2 - \beta(2-\beta) \mathbb{E}_{\xi_{k-1}} \left[\frac{(h(v_{k-1},\xi_{k-1}))_+^2}{B^2_h}| \mathcal{F}_{[k-1]} \right]\\
		& \overset{\eqref{eq:constrainterrbound}}{\leq} \|v_{k-1} -\bar{v}_{k-1}\|^2 - \frac{ \beta(2-\beta)}{c B^2_h}  \dist^2(v_{k-1}, \mathcal{X}).
	\end{align*}
	Taking now full expectation, we obtain: 
	\begin{align}  
		\label{eq:yvk-1}
			\mathbb{E} [\|x_k - \bar{x}_k\|^2  ] \leq 	\mathbb{E} [ \|v_{k-1} -\bar{v}_{k-1}\|^2 ]  - \frac{ \beta(2-\beta)}{c B^2_h}  \mathbb{E} [ \dist^2(v_{k-1}, \mathcal{X})],  
\end{align}			
	and using this relation  in  	\eqref{eq:central}, we get:
	\begin{align}
		\label{eq:rec1}
		& \mathbb{E}\left[\|v_{k} - \bar{v}_{k} \|^2 \right]  + \frac{ \beta(2-\beta)}{c B^2_h} \mathbb{E}\left[ \dist^2(v_{k-1}, \mathcal{X}) \right]  +\alpha_k  (2 - \alpha_k L)  \mathbb{E}\left[ F(x_k) - F(\bar{x}_{k})  \right] \nonumber  \\ 
		& \leq   \mathbb{E}[\| v_{k-1} - \bar{v}_{k-1} \|^2]  + \alpha_k^2 B^2. 
	\end{align}
Similarly, for $y=\Pi_{\mathcal X} ({v}_{k-1}) \subseteq \mathcal{X} \subseteq \mathcal{Y}$ we have that $(h(\Pi_{\mathcal{X}}({v}_{k-1}),\xi_{k-1}))_+ = 0$, and thus using again  Lemma~\ref{lem:distanyy}, we obtain:
	\begin{align*}  
 & \dist^2(x_{k}, \mathcal{X})  = \|x_{k} - \Pi_{\mathcal X} ({x}_{k})\|^2 \le \|x_{k} - \Pi_{\mathcal X} ({v}_{k-1}) \|^2 \\  
 & \leq  \dist^2(v_{k-1}, \mathcal{X}) -\beta(2-\beta) \frac{(h(v_{k-1},\xi_{k-1}))_+^2}{B^2_h}. 
		  \end{align*}
Taking conditional expectation on  $\xi_{k-1}$ given  $\mathcal{F}_{[k-1]} $, we get:
	\begin{align*}    
	& \mathbb{E}_{\xi_{k-1}}\left[\dist^2(x_{k}, \mathcal{X})  |  \mathcal{F}_{[k-1]} \right] \\ & \leq  \dist^2(v_{k-1}, \mathcal{X})  -   \frac{\beta(2-\beta) }{B^2_h}      \mathbb{E}_{\xi_{k-1}} \left[   (h(v_{k-1},\xi_{k-1}))_+^2 | \mathcal{F}_{[k-1]} \right]\\
	& \overset{\eqref{eq:constrainterrbound}}{\leq}  	\left( 1 -   \frac{\beta(2-\beta) }{c B^2_h} \right)   \dist^2(v_{k-1}, \mathcal{X}).  
	\end{align*} 
	After taking full expectation, we get:
	\begin{align}
		\label{eq:distvdistx}    
		\mathbb{E}\left[   \dist^2(x_{k}, \mathcal{X}) \right] & \leq   
		\left( 1 -   \frac{\beta(2-\beta) }{c B^2_h} \right) \mathbb{E}\left[   \dist^2(v_{k-1}, \mathcal{X}) \right]. 
	\end{align}
	Using	\eqref{eq:distvdistx}   in \eqref{eq:rec1}, we obtain:
	\begin{align*}
%	\label{eq:rec2}
	%		\mathbb{E} [\|x_k - \bar{x}_k\|^2  ] \leq 	\mathbb{E} [ \|v_{k-1} -\bar{v}_{k-1}\|^2 ] - \frac{ \beta(2-\beta)}{c B^2_h}   \left( 1 -   \frac{\beta(2-\beta) }{c B^2_h} \right)^{-1}   \mathbb{E}\left[ \dist^2(x_{k}, \mathcal{X}) \right].
	& \mathbb{E}\left[\|v_{k} - \bar{v}_{k} \|^2 \right]  + \frac{ \beta(2-\beta)}{c B^2_h}   \left( 1 -   \frac{\beta(2-\beta) }{c B^2_h} \right)^{-1}   \mathbb{E}\left[ \dist^2(x_{k}, \mathcal{X}) \right] \\
	& +\alpha_k  (2 - \alpha_k L)  \mathbb{E}\left[ F(x_k) - F(\bar{x}_{k})  \right]  \leq  \mathbb{E}\left[ \| v_{k-1} - \bar{v}_{k-1} \|^2 \right] + \alpha_k^2 B^2.\nonumber  
\end{align*}

%\noindent\red{Or, equivalently, we can write it as:
%\begin{align*}
%	& \mathbb{E} [\|x_{k+1} - \bar{x}_{k+1}\|^2  ] \leq 	\mathbb{E} [ \|v_{k} -\bar{v}_{k}\|^2 ] -  C_{\beta,c,B_h} \mathbb{E}\left[ \dist^2(x_{k+1}, \mathcal{X}) \right]\\
%	& \overset{\eqref{eq:central}}{\le} \mathbb{E} [ \|x_{k} -\bar{x}_{k}\|^2 ] -  C_{\beta,c,B_h} \mathbb{E}\left[ \dist^2(x_{k+1}, \mathcal{X}) \right] + \frac{\eta}{2} \mathbb{E}\left[ \dist^2(x_{k}, \mathcal{X}) \right] +  \left( 1 + \frac{4}{\eta} \right) \alpha_k^2 B^2.
%\end{align*}
%}

\noindent \red{ Since  $\alpha_k$  satisfies \eqref{eq:alk}, then $\alpha_k(2-\alpha_kL)  \leq 1$, and we further get the following recurrence:  
\begin{align}
	\label{eq:rec2_2}
	& \mathbb{E}\left[\|v_{k} - \bar{v}_{k} \|^2 \right]  + C_{\beta,c,B_h}  \alpha_k  (2 - \alpha_k L)   \mathbb{E}\left[ \dist^2(x_{k}, \mathcal{X}) \right] \\
	& +\alpha_k  (2 - \alpha_k L)  \mathbb{E}\left[ F(x_k) - F(\bar{x}_{k})  \right]  \leq  \mathbb{E}\left[ \| v_{k-1} - \bar{v}_{k-1} \|^2 \right] + \alpha_k^2 B^2.\nonumber  
\end{align}   
	Summing  \eqref{eq:rec2_2} from $1$ to $k$, we get: 
	\begin{align*}
		& \mathbb{E}\left[\|v_{k} - \bar{v}_{k} \|^2 \right]  + C_{\beta,c,B_h} \sum_{j=1}^{k} \alpha_j  (2 - \alpha_j L) \mathbb{E}\left[ \dist^2(x_{j}, \mathcal{X}) \right] \\
		& + \sum_{j=1}^{k} \alpha_j  (2 - \alpha_j L)  \mathbb{E}\left[  F(x_j) - F^*  \right]  \leq   \| v_0 - \bar{v}_{0} \|^2  +  B^2 \sum_{j=1}^{k} \alpha_j^2.
	\end{align*}
Using the definition of the  average sequence $\hat{x}_k$ and the  convexity of $F$ and of $ \dist^2(\cdot, \mathcal{X})$, we further get sublinear rates in expectation for the average sequence in terms of  optimality: 
	\begin{align*}
		& \mathbb{E}\left[ F(\hat{x}_k ) - F^*  \right] \leq   \sum_{j=1}^{k} \frac{\alpha_j(2 - \alpha_j L)}{S_k}   \mathbb{E}\left[  F(x_j) - F^*  \right]    \leq   \frac{\| v_0 - \bar{v}_{0} \|^2}{S_k}  +  B^2 \frac{\sum_{j=1}^{k} \alpha_j^2}{S_k},
	\end{align*}  
	and  feasibility violation:
	\begin{align*}
	 C_{\beta,c,B_h}\mathbb{E}\left[ \dist^2(\hat{x}_k, \mathcal{X}) \right]  & \leq  C_{\beta,c,B_h} \sum_{j=1}^{k} \frac{\alpha_j(2 - \alpha_j L)}{S_k} \mathbb{E}\left[ \dist^2(x_{j}, \mathcal{X}) \right] \\ 
		& \leq   \frac{\| v_0 - \bar{v}_{0} \|^2}{S_k}  +  B^2 \frac{\sum_{j=1}^{k} \alpha_j^2}{S_k}.
	\end{align*}
These conclude  our statements.   }                                                
\end{proof}

\noindent Note that for stepsize $\alpha_k=\frac{\alpha_0}{(k+1)^\gamma}$, with  $\gamma \in [1/2, 1)$  \red{and $\alpha_0$ satisfies \eqref{eq:alk}}, we have:
\[  S_k \red{\overset{\eqref{eq:alk}}{\geq}}  \sum_{j=1}^{k} \alpha_j \geq {\cal O}(k^{1-\gamma}) \quad \text{and} \quad \sum_{j=1}^{k} \alpha_j^2 \leq 
\begin{cases}
	{\cal O}(1) \;\;  \text{if} \; \gamma>1/2 \\
	{\cal O}(\ln(k)) \;\;\;  \text{if} \; \gamma=1/2. 
\end{cases} 
\] 
Consequently for  $\gamma \in (1/2, 1)$ we obtain from Theorem \ref{th:nonstrongconv} the following sublinear convergence rates: 
\begin{align*}
	& \mathbb{E}\left[ ( F(\hat{x}_k ) - F^* ) \right] \leq    \frac{\| v_0 - \bar{v}_{0} \|^2}{\red{{\cal O}(k^{1-\gamma})}}  +  \frac{B^2\red{{\cal O}(1)}}{\red{{\cal O}(k^{1-\gamma})}}, \\
	& \mathbb{E}\left[ \dist^2(\hat{x}_k, \mathcal{X}) \right]  \leq    \frac{ \| v_0 - \bar{v}_{0} \|^2}{ C_{\beta,c,B_h} \cdot \red{{\cal O}(k^{1-\gamma})}}  +  \frac{ B^2\red{{\cal O}(1)}}{C_{\beta,c,B_h} \cdot \red{{\cal O}(k^{1-\gamma})}}.
\end{align*}   
\red{ For the particular choice $\gamma=1/2$, we get similar rates as above just replacing ${\cal O}(1)$ with $	{\cal O}(\ln(k))$. However,  if we neglect the logarithmic terms, we get  sublinear convergence rates of order:}
\begin{align*}
	\mathbb{E}\left[ ( F(\hat{x}_k ) - F^* ) \right] \leq    {\cal O} \left( \frac{1}{k^{1/2}} \right) \quad \text{and} \quad \mathbb{E}\left[ \dist^2(\hat{x}_k, \mathcal{X}) \right]  \leq {\cal O} \left( \frac{1}{k^{1/2}} \right).
\end{align*}

\noindent It is important to note that when $B=0$, from  Theorem \ref{th:nonstrongconv} improved rates can be derived for SSP algorithm in the convex case. More precisely,  for stepsize $\alpha_k=\frac{\alpha_0}{(k+1)^\gamma}$, with  $\gamma \in [0, 1)$  \red{and $\alpha_0$ satisfies \eqref{eq:alk}}, we obtain convergence rates for  $\hat{x}_k$  in optimality and feasibility violation of order  ${\cal O} \left( \frac{1}{k^{1-\gamma}} \right)$. In particular,  for $\gamma=0$ (i.e., constant stepsize $\alpha_k= \alpha_0 \in \left(0, \min(\frac{1}{2},\frac{1}{L}) \right)$ for all $k \geq 0$) the previous convergence estimates yield rates of order  ${\cal O} \left( \frac{1}{k} \right)$.  
From our best knowledge these rates are new for stochastic subgradient methods applied on the class of optimization problems \eqref{eq:prob}.

%%%%%%%%%%%%%%%%%%%%%%%%%%%%%%%%%%%%

\subsection{Convergence analysis: \red{strongly convex} objective function} \label{sec3.2}
In this section we additionally assume the \red{strong convex} inequality from Assumption \ref{assumption2} with $\mu>0$.  The next lemma derives an improved recurrence relation for the sequence $v_k$  under  the \red{strong convexity. Furthermore, due to strongly convex assumption on $F$, problem \eqref{eq:prob} has a unique optimum, denoted $x^*$. Our proofs from this section are different from the ones  in \citep{AngNec:19}, since here we consider a more general type of bounded gradient condition, i.e., Assumption \ref{assumption1}.  }

\begin{lemma}
\label{lemma3.4}
Let  $f(\cdot,\zeta), g(\cdot,\zeta)$ and  $h(\cdot,\xi)$  be  convex functions.  Additionally,  Assumptions \ref{assumption1}--\ref{assumption4} hold, \red{with $\mu>0$}. Define \blue{$k_0 = \lfloor\frac{8L}{\mu} - 1 \rfloor$}, $\beta \in \left(0,2\right)$, $\theta_{L,\mu} \!=\! 1  \!-\! \mu/(4L)$  and $\alpha_k \!=\! \frac{4}{\mu} \gamma_{k}$,  where $\gamma_k$ is given~by:  
	\begin{equation*}
		\gamma_{k} = \left\{\begin{array}{ll}\frac{\mu}{4L} & \text{\emph{if}}\;\; k\leq k_0\\
			\frac{2}{k+1}& \text{\emph{if}}\;\; k > k_0.
		\end{array}
		\right.
	\end{equation*}
	Then, the iterates of SSP algorithm  satisfy the following recurrence:
	\begin{align*}
		& \mathbb{E}[\|v_{k_0} - x^*\|^2] 
		\leq    \left\{\begin{array}{ll}    \frac{B^2}{L^2}    & \text{\emph{if}} \;\; \theta_{L,\mu}  \leq 0\\
			\theta_{L,\mu}^{k_0}  \|v_{0} - x^*\|^2 +  \frac{1 - \theta_{L,\mu}^{k_0}  }{1 - \theta_{L,\mu}} \left( 1 + \frac{5}{2C_{\beta,c,B_h} \theta_{L,\mu}} \right)  \frac{B^2}{L^2}  & \text{\emph{if}} \;\; \theta_{L,\mu}  >  0, 
		\end{array}
		\right. \\
		&\mathbb{E}[\|v_k - x^*\|^2] +\gamma_{k} \mathbb{E}[\| x_k - x^*\|^2] + \frac{C_{\beta,c,B_h}}{6} \mathbb{E}[\dist^2(x_{k}, \mathcal{X})]\\
		&  \leq \left(1 - \gamma_{k}\right)\mathbb{E}[\|v_{k-1}- x^*\|^2] + \left( 1 + \frac{6}{C_{\beta,c,B_h} } \right)\frac{16 B^289}{\mu^2}\gamma_{k}^2   \qquad  \forall k> k_0.
	\end{align*}
\end{lemma}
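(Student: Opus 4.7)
My plan is to base everything on a single ``master'' inequality obtained by combining Lemmas~\ref{th:spg_basic} and \ref{lem:distanyy} with the strong convexity of $F$, and then to massage it differently in the constant and decreasing stepsize regimes. First, I would re-run the derivation from the proof of Theorem~\ref{th:nonstrongconv}---Lemma~\ref{th:spg_basic}, then Lemma~\ref{lem:distanyy} applied with $y = x^*$ and with $y = \Pi_{\mathcal X}(v_{k-1})$, plus Assumption~\ref{assumption4}---but stop \emph{before} the relaxation $\alpha_k(2-\alpha_k L)\le 1$ is invoked. Since strong convexity forces $x^* = \bar{x}_k = \bar{v}_k$ to be unique, this produces
\[
\mathbb{E}[\|v_k - x^*\|^2] + C_{\beta,c,B_h}\,\mathbb{E}[\dist^2(x_k,\mathcal X)] + \alpha_k(2-\alpha_k L)\,\mathbb{E}[F(x_k)-F^*] \le \mathbb{E}[\|v_{k-1}-x^*\|^2] + \alpha_k^2 B^2.
\]
Applying Assumption~\ref{assumption2} as $F(x_k)-F^* \ge \tfrac{\mu}{2}\|x_k-x^*\|^2$ and plugging in $\alpha_k = 4\gamma_k/\mu$, a direct check using $\gamma_k L/\mu \le 1/4$ in both regimes (constant phase by definition; decreasing phase because $k+1 > k_0 \ge 8L/\mu$ gives $\gamma_k = 2/(k+1) < \mu/(4L)$) shows $\alpha_k(2-\alpha_k L)\tfrac{\mu}{2} = 4\gamma_k(1-2\gamma_k L/\mu) \ge 2\gamma_k$, hence the master inequality
\[
\mathbb{E}[\|v_k - x^*\|^2] + C_{\beta,c,B_h}\,\mathbb{E}[\dist^2(x_k,\mathcal X)] + 2\gamma_k\,\mathbb{E}[\|x_k-x^*\|^2] \le \mathbb{E}[\|v_{k-1}-x^*\|^2] + \tfrac{16 B^2}{\mu^2}\gamma_k^2.
\]

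To extract the per-step recurrence for $k>k_0$, I would split the LHS as $2\gamma_k\mathbb{E}[\|x_k-x^*\|^2] = \gamma_k\mathbb{E}[\|x_k-x^*\|^2]+\gamma_k\mathbb{E}[\|x_k-x^*\|^2]$ and $C_{\beta,c,B_h}\mathbb{E}[\dist^2(x_k,\mathcal X)] = \tfrac{C_{\beta,c,B_h}}{6}\mathbb{E}[\dist^2(x_k,\mathcal X)] + \tfrac{5C_{\beta,c,B_h}}{6}\mathbb{E}[\dist^2(x_k,\mathcal X)]$, retaining the first summand of each as required by the statement. The leftover summands must then generate the missing $-\gamma_k\mathbb{E}[\|v_{k-1}-x^*\|^2]$ on the RHS. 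The natural tool is a Young-style inequality
\[
\|v_{k-1}-x^*\|^2 \;\le\; (1+\eta)\|x_k-x^*\|^2 + (1+1/\eta)\|v_{k-1}-x_k\|^2,
\]
combined with a bound on $\mathbb{E}[\|v_{k-1}-x_k\|^2]$ by a constant multiple of $\mathbb{E}[\dist^2(x_k,\mathcal X)]$ coming from the Polyak feasibility geometry together with Assumption~\ref{assumption3} ($\|\nabla h\|\le B_h$), the nonexpansiveness of $\Pi_{\mathcal Y}$, and the distance-transfer inequality $\mathbb{E}[\dist^2(x_k,\mathcal X)]\le(1-\tfrac{\beta(2-\beta)}{cB_h^2})\mathbb{E}[\dist^2(v_{k-1},\mathcal X)]$ established inside the proof of Theorem~\ref{th:nonstrongconv}. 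The free parameter $\eta$ is tuned so that the coefficients on $\|x_k-x^*\|^2$ and $\dist^2(x_k,\mathcal X)$ exactly consume the reserves $\gamma_k$ and $5C_{\beta,c,B_h}/6$, producing the noise multiplier $(1 + 6/C_{\beta,c,B_h})$ on $16B^2\gamma_k^2/\mu^2$.

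For the closed-form bound on $\mathbb{E}[\|v_{k_0}-x^*\|^2]$, I would specialize to the constant-stepsize regime, where $\gamma_k \equiv \mu/(4L)$, $\alpha_k = 1/L$, $\alpha_k^2 B^2 = B^2/L^2$, and $1-\gamma_k = \theta_{L,\mu}$. A variant of the Young split tailored to this regime produces the scalar recurrence
\[
\mathbb{E}[\|v_k-x^*\|^2] \le \theta_{L,\mu}\,\mathbb{E}[\|v_{k-1}-x^*\|^2] + \bigl(1+\tfrac{5}{2C_{\beta,c,B_h}\theta_{L,\mu}}\bigr)\tfrac{B^2}{L^2},
\]
which telescopes from $k=1$ to $k=k_0$ and yields the geometric sum $\tfrac{1-\theta_{L,\mu}^{k_0}}{1-\theta_{L,\mu}}$ in the stated bound. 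In the degenerate range $\theta_{L,\mu}\le 0$ (i.e., $\mu \ge 4L$), the contraction factor is nonpositive so the $\mathbb{E}[\|v_{k-1}-x^*\|^2]$ term can simply be dropped from the RHS of the master inequality, and one step already delivers the uniform bound $B^2/L^2$.

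The main obstacle is the second paragraph: controlling $\mathbb{E}[\|v_{k-1}-x_k\|^2]$ by a linear combination of $\mathbb{E}[\dist^2(x_k,\mathcal X)]$ and $\alpha_k^2 B^2$. Since $v_{k-1}$ is the output of a stochastic proximal step and $x_k$ is the subsequent $\Pi_{\mathcal Y}$-projection of the Polyak-corrected iterate $z_{k-1}$, their separation depends simultaneously on the infeasibility of $v_{k-1}$ and on the stochastic gradient magnitude; moreover the factor $1/\|\nabla h(v_{k-1},\xi_{k-1})\|$ in the Polyak step must be handled with care since Assumption~\ref{assumption3} only provides an upper bound on $\|\nabla h\|$. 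Getting the precise constants ($1/6$, $6/C_{\beta,c,B_h}$, and $5/(2C_{\beta,c,B_h}\theta_{L,\mu})$) to align requires a delicate Young balancing, and this is where the argument departs most from the convex case handled in Theorem~\ref{th:nonstrongconv}.
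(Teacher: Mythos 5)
Your argument breaks at the very first use of strong convexity: the inequality $F(x_k)-F^*\ge \tfrac{\mu}{2}\|x_k-x^*\|^2$ is not available here. The point $x^*$ minimizes $F$ only over the constraint set $\mathcal{X}$, so the optimality condition $\langle \nabla F(x^*), y-x^*\rangle\ge 0$ holds only for $y\in\mathcal{X}$, while the iterates $x_k$ live in $\mathcal{Y}$ and are in general infeasible with respect to the functional constraints (that is the whole reason for the Polyak feasibility steps). Consequently the linear term $\langle \nabla F(x^*), x_k-x^*\rangle$ in Assumption \ref{assumption2} cannot be dropped; it can be negative, and indeed $F(x_k)-F^*$ itself can be negative. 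The paper's proof deals with exactly this: it keeps the subgradient term, splits it as $\langle \nabla F(x^*), x_k-\Pi_{\mathcal{X}}(x_k)\rangle+\langle \nabla F(x^*), \Pi_{\mathcal{X}}(x_k)-x^*\rangle$, discards the second piece by optimality, and bounds the first by Young's inequality using $\|\nabla F(x^*)\|^2\le B^2$ (inequality \eqref{as:main1_spg2}) with the parameter $\eta=C_{\beta,c,B_h}(1-\mu\alpha_k/4)$, so that the resulting $\tfrac{\eta}{2}\,\mathbb{E}[\dist^2(x_k,\mathcal{X})]$ is absorbed by half of the distance reserve coming from Lemma \ref{lem:distanyy}. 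This is precisely the origin of the constants $1+6/C_{\beta,c,B_h}$ and $1+\tfrac{5}{2C_{\beta,c,B_h}\theta_{L,\mu}}$ in the statement; your ``master inequality'' with noise term $\tfrac{16B^2}{\mu^2}\gamma_k^2$ and no such terms is simply not true for this constrained setting.

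The second stage of your plan is also not workable as described. To create the $(1-\gamma_k)$ factor you propose $\|v_{k-1}-x^*\|^2\le(1+\eta)\|x_k-x^*\|^2+(1+1/\eta)\|v_{k-1}-x_k\|^2$, but after retaining the $\gamma_k\,\mathbb{E}[\|x_k-x^*\|^2]$ term required by the lemma your reserve on $\|x_k-x^*\|^2$ is only $\gamma_k$, while the split consumes $(1+\eta)\gamma_k$ for any $\eta>0$; the deficit cannot be absorbed by distance terms, since $\|x_k-x^*\|$ is not controlled by $\dist(x_k,\mathcal{X})$. Moreover, bounding $\mathbb{E}[\|v_{k-1}-x_k\|^2]$ by a multiple of $\mathbb{E}[\dist^2(x_k,\mathcal{X})]$ needs an inequality of the form $\dist(v_{k-1},\mathcal{X})\lesssim \dist(x_k,\mathcal{X})$, which is the reverse of the transfer inequality \eqref{eq:distvdistx} actually available. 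The paper sidesteps all of this: Lemma \ref{lem:distanyy} with $y=x^*$ gives directly $\mathbb{E}[\|x_k-x^*\|^2]\le \mathbb{E}[\|v_{k-1}-x^*\|^2]-C_{\beta,c,B_h}\mathbb{E}[\dist^2(x_k,\mathcal{X})]$, so the $(1-\mu\alpha_k/4)\,\mathbb{E}[\|x_k-x^*\|^2]$ term is converted wholesale into $(1-\gamma_k)\,\mathbb{E}[\|v_{k-1}-x^*\|^2]$ minus a distance term, no extra Young step being needed at that point. You would need to rebuild your proof around the correct strong-convexity estimate and this direct substitution before the claimed constants can be recovered.
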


\begin{proof}
One can easily see that our stepsize can be written equivalently as $\alpha_k = \min\left(\frac{1}{L}, \frac{8}{\mu (k + 1)}\right)$ (for $L=0$ we use the convention  $1/L=\infty$). 
%Since for this choice of the stepsize we have $\alpha_{k} \leq \frac{1}{L}$, then $(2-\alpha_{k}L) \geq 1$ and using this in  Lemma~\ref{th:spg_basic} combined with  Assumption \ref{assumption2}, we get:
Using  Assumption \ref{assumption2} in Lemma~\ref{th:spg_basic}, we get: 
	\begin{align}
		\label{ineq17}
		&\mathbb{E}[\|v_{k} - x^*\|^2]  \leq  \mathbb{E}[\| x_k - x^*\|^2] -  \alpha_k\red{ (2 - \alpha_k L)} \mathbb{E}[( F(x_k) - F^* )] + \alpha_k^2 B^2 \nonumber \\
		 & \overset{\text{Assumption} \, \ref{assumption2}}{\leq}   \mathbb{E}[\| x_k - x^*\|^2] - \red{\alpha_k (2 - \alpha_k L) \mathbb{E}\left[ \langle \nabla F(x^*), x_k - x^*\rangle + \frac{\mu}{2} \| x_k - x^*\|^2 \right] } + \alpha_k^2 B^2\nonumber\\
		 & \le \red{\left(1 - \frac{\mu \alpha_k}{2} \right)\mathbb{E}[\| x_k - x^*\|^2] - \alpha_k (2 - \alpha_k L) \mathbb{E}\left[ \langle \nabla F(x^*), x_k - \Pi_\mathcal{X} (x_k) \rangle \right] + \alpha_k^2 B^2} \nonumber\\
		 & \red{\le \left(1 - \frac{\mu \alpha_k}{2} \right)\mathbb{E}[\| x_k - x^*\|^2] + \frac{\eta}{2} \mathbb{E}[\|x_k - \Pi_\mathcal{X} (x_k)\|^2] +  \frac{\alpha^2_k (2 - \alpha_k L)^2}{2\eta} \mathbb{E}\left[ \| \nabla F(x^*)\|^2 \right] + \alpha_k^2 B^2}\nonumber\\
		 & \red{\overset{\eqref{as:main1_spg2}}{\le} \left(1 - \frac{\mu \alpha_k}{4} \right)\mathbb{E}[\| x_k - x^*\|^2] - \frac{\mu \alpha_k}{4} \mathbb{E}[\| x_k - x^*\|^2]  + \frac{\eta}{2} \mathbb{E}[\|x_k - \Pi_\mathcal{X} (x_k)\|^2]  \nonumber}\\
		 	& \quad \red{+  \left( 1 + \frac{2}{\eta} \right) \alpha_k^2 B^2 },
	\end{align}
\red{where the third inequality uses the property of the stepsize (if  $\alpha_{k} \leq \frac{1}{L}$, then $(2-\alpha_{k}L) \geq 1$),  together with the optimality condition ($ \langle \nabla F(x^*), \Pi_\mathcal{X} (x_k) - x^*\rangle \ge 0$ for all $k$), the fourth inequality uses the identity $\langle a, b\rangle \ge -\frac{1}{2\eta} \|a\|^2 - \frac{\eta}{2} \|b\|^2$ for any $a,b \in \mathbb{R}^n, \eta >0$, and the final inequality uses the fact that $(2 - \alpha_k L)^2  \le 4$. }
From 	\eqref{eq:yvk-1}, we also have: 
	\begin{align}\label{rel2}
		& \mathbb{E}[\|x_k - x^*\|^2]  \overset{\eqref{eq:yvk-1}}{\leq} \mathbb{E}[\| v_{k-1} - x^*\|^2] - \beta(2-\beta)\frac{\mathbb{E} [\dist^2(v_{k-1}, \mathcal{X})]}{c B^2_h}\nonumber\\
		& \overset{(\ref{eq:distvdistx})}{\leq} \mathbb{E}[\| v_{k-1} - x^*\|^2] - \frac{\beta(2-\beta)}{cB_h^2}\left(1 - \frac{\beta(2-\beta)}{cB_h^2}\right)^{-1}\mathbb{E}[\dist^2(x_{k}, \mathcal{X})]\nonumber\\
		& = \mathbb{E}[\| v_{k-1} - x^*\|^2] - C_{\beta,c,B_h}\mathbb{E}[\dist^2(x_{k}, \mathcal{X})].
	\end{align}
\red{Taking $\eta = C_{\beta,c,B_h}\left(1 - \frac{\mu \alpha_k}{4} \right)>0$ and combining \eqref{ineq17} with \eqref{rel2},  we obtain:
\begin{align}
	\label{eq:1way}
	\mathbb{E}[\|v_{k} - x^*\|^2] \le & \left(1 - \frac{\mu \alpha_k}{4} \right) \mathbb{E}[\| v_{k-1} - x^*\|^2] - \frac{1}{2} C_{\beta,c,B_h}\left(1 - \frac{\mu \alpha_k}{4} \right) \mathbb{E}[\dist^2(x_{k}, \mathcal{X})] \nonumber \\
	& - \frac{\mu \alpha_k}{4} \mathbb{E}[\| x_k - x^*\|^2] +  \left( 1 + \frac{2}{C_{\beta,c,B_h} \left(1 - \frac{\mu \alpha_k}{4} \right)} \right) \alpha_k^2 B^2.
\end{align}
}
For $L=0$ we have $k_0=0$. For $L>0$, then $k_0>0$ and  for any $k \leq k_0$, we have $\alpha_{k} = \frac{1}{L}$. Hence,   from \eqref{eq:1way},  we obtain  for any $k \leq k_0$:
	\begin{align*}
		\mathbb{E}[\|v_k - x^*\|^2]
		& \leq \left(  1 - \frac{\mu}{4L}  \right) \mathbb{E}[\|v_{k-1} - x^*\|^2]  + \left( 1 + \frac{2}{C_{\beta,c,B_h} \left(1 - \frac{\mu}{4L}\right)} \right) \frac{B^2}{L^2}\\
	%	&  \leq \max \left(  \left(1 - \frac{\mu}{4L}  \right) \mathbb{E}[\|v_{k-1} - x^*\|^2]  + \left( 1 + \frac{2}{C_{\beta,c,B_h} \left(1 - \frac{\mu}{4L}\right)} \right)\frac{B^2}{L^2}, \frac{B^2}{L^2} \right)\\
		&  \leq \max \left(  \left(1 - \frac{\mu}{4L}  \right) \mathbb{E}[\| v_{k-1} - x^*\|^2]  + \left( 1 + \frac{2}{C_{\beta,c,B_h} \left(1 - \frac{\mu}{4L}\right)} \right)\frac{B^2}{L^2}, \frac{B^2}{L^2} \right).
	\end{align*}
Using the geometric sum formula and recalling that $\theta_{L,\mu} = 1- \mu/(4L)$,  we obtain the first statement.   
\noindent Further,  for $k>k_0$, from relation \eqref{eq:1way}, we have: 
%	\begin{align*}
%		\mathbb{E}[\|v_{k} - \bar{v}_k\|^2] 
%		& = \mathbb{E}[\| x_k - \bar{x}_k\|^2] -  \left(\frac{\mu\alpha_k}{4} + \frac{\mu\alpha_k}{4}\right) \mathbb{E}[\| x_k - \bar{x}_k\|^2] + \alpha_k^2 B^2\\
%		& = \left(1-\frac{\mu\alpha_k}{4}\right)\mathbb{E}[\| x_k - \bar{x}_k\|^2] -  \left(\frac{\mu\alpha_k}{4}\right) \mathbb{E}[\| x_k - \bar{x}_k\|^2] + \alpha_k^2 B^2.
%	\end{align*}
%Further, if we take $v=x_k \in \mathbb{R}^n$ and $y = \bar{x}_k \in {\mathcal{X}}^* \subset \mathcal{X}$ in  \eqref{proj_prop}, we have: 
%$$\|\Pi_{\mathcal X} (x_k) - \bar{x}_k\|^2 \leq \|x_k - \bar{x}_k\|^2$$  
%and using this in the previous recurrence,  we get:
%	\begin{align*}
%		&\mathbb{E}[\|v_{k} - \bar{v}_k\|^2]  \leq \left(1-\frac{\mu\alpha_k}{4}\right)\mathbb{E}[\| x_k - \bar{x}_k\|^2] -  \frac{\mu\alpha_k}{4} \mathbb{E}[\| \Pi_{\mathcal X} (x_k) - \bar{x}_k\|^2] + \alpha_k^2 B^2.
%	\end{align*}
%Combining this recurrence with  \eqref{rel2} and using that  $0 \leq 1 - \mu\alpha_k/4 \leq 1$ for all $k > k_0$, we further  obtain:
%	\begin{align*}
%		&\mathbb{E}[\|v_k - \bar{v}_{k}\|^2] +\frac{\mu\alpha_k}{4} \mathbb{E}[\| \Pi_{\mathcal X} (x_k) - \bar{x}_k\|^2] \\ 
%		& \leq \left(1 - \frac{\mu\alpha_k}{4}\right)\mathbb{E}[\|v_{k-1} -\bar{v}_{k-1}\|^2] - \left(1 - \frac{\mu\alpha_k}{4}\right) C_{\beta,c,B_h}\mathbb{E}[\dist^2(x_{k}, \mathcal{X})] +\alpha_{k}^2 B^2. 
%			\end{align*}
%After rearranging the terms, we have: 		
	\begin{align*}
		&\mathbb{E}[\|v_k - x^*\|^2] +\frac{\mu\alpha_k}{4} \mathbb{E}[\|x_k - x^*\|^2] + \left(1 - \frac{\mu\alpha_k}{4}\right)\frac{C_{\beta,c,B_h}}{2}\mathbb{E}[\dist^2(x_{k}, \mathcal{X})] \\
		& \qquad\qquad\qquad \leq \left(1 - \frac{\mu\alpha_k}{4}\right)\mathbb{E}[\|v_{k-1} - x^*\|^2] +  \left( 1 + \frac{2}{C_{\beta,c,B_h} \left(1 - \frac{\mu \alpha_k}{4} \right)} \right) \alpha_{k}^2 B^2.\nonumber
	\end{align*}
Since $k > \blue{k_0 = \lfloor{\frac{8L}{\mu}} - 1\rfloor}$ and $\alpha_k = \frac{4}{\mu} \gamma_k$, we get: 
	\begin{align*}
		&\mathbb{E}[\|v_k - x^*\|^2] +\gamma_{k} \mathbb{E}[\|  x_k - x^*\|^2] + \left(1 - \gamma_{k}\right)\frac{C_{\beta,c,B_h}}{2} \mathbb{E}[\dist^2(x_{k}, \mathcal{X})]\\
		& \qquad\qquad\qquad \leq \left(1 - \gamma_{k}\right)\mathbb{E}[\|v_{k-1}- x^*\|^2] +\left( 1 + \frac{2}{C_{\beta,c,B_h} \left(1 - \gamma_k \right)} \right) \frac{16}{\mu^2}\gamma_{k}^2 B^2 \quad \forall  k> k_0.
	\end{align*}
    Note that in this case 	$\gamma_k = 2/(k+1)$ is a decreasing sequence, an thus we have:  
	\begin{align*}
		1 - \gamma_k = \frac{k-1}{k+1}\geq \frac{1}{3}  \quad \forall k \geq 2.
	\end{align*}
	Using this bound in the previous recurrence, we also get the second statement. 
\end{proof}

\noindent Now, we are ready to derive sublinear rates when we assume a \red{strongly convex condition} on the objective function.  Let us define for $k \geq k_0+1$ the sum: 
\begin{align*}
	\bar{S}_k = \sum_{j=k_0+1}^{k} (j+1)^2 \sim \mathcal{O} (k^3 - k_0^3), 
\end{align*}
and  the corresponding   average sequences: 
\begin{align*}
	&\hat{x}_k = \frac{\sum_{j=k_0+1}^{k} (j+1)^2 x_j}{\bar{S}_k}, \quad
%		\hat{x}_k^* = {\bar{S}_k}^{-1} \sum_{j=k_0+1}^{k} (j+1)^2 \bar{x}_j \in \mathcal{X}^* \\
    \text{and} \quad   \hat{w}_k = \frac{\sum_{j=k_0+1}^{k} (j+1)^2 \Pi_{\mathcal X}(x_j)}{\bar{S}_k} \in \mathcal{X}. 
\end{align*}

\begin{theorem}
\label{th:strcase2}
Let  $f(\cdot,\zeta), g(\cdot,\zeta)$ and  $h(\cdot,\xi)$  be  convex functions.  Additionally,  Assumptions \ref{assumption1}--\ref{assumption4} hold, with $\mu>0$.  Further, consider stepsize  $\alpha_k = \min\left(\frac{1}{L}, \frac{8}{\mu (k + 1)}\right)$ and $\beta \in \left(0,2\right)$. Then, for $k> k_0$, where \blue{$k_0 = \lfloor{\frac{8L}{\mu}} - 1\rfloor$}, we have the following sublinear convergence rates  for the  average sequence $	\hat{x}_k $ in terms of  optimality and feasibility violation for problem \eqref{eq:prob} (we keep only the dominant terms):
	\begin{align*}
		&\mathbb{E} [\|\hat{x}_k - x^*\|^2] \leq \mathcal{O} \left( \frac{B^2}{\mu^2 C_{\beta,c,B_h} (k+1)} \right),\\
		&\mathbb{E}\left[\dist^2(\hat{x}_k, \mathcal{X})\right]  \leq \mathcal{O}\left( \frac{B^2}{\mu^2C_{\beta,c,B_h}^2 (k + 1)^2}\right).
	\end{align*}
\end{theorem}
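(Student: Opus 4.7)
The plan is to turn the one-step recurrence in Lemma \ref{lemma3.4} (for $j>k_0$) into a telescoping inequality with polynomial weights, and then apply Jensen's inequality to the weighted average $\hat{x}_k$. Writing $u_j=\mathbb{E}[\|v_j-x^*\|^2]$, $a_j=\mathbb{E}[\|x_j-x^*\|^2]$, $d_j=\mathbb{E}[\dist^2(x_j,\mathcal{X})]$, $C=C_{\beta,c,B_h}$, and $D=(1+6/C)\cdot 16B^2/\mu^2$, the second estimate of Lemma \ref{lemma3.4} reads
\[
u_j + \gamma_j\,a_j + (C/6)\,d_j \;\leq\; (1-\gamma_j)\,u_{j-1} + D\gamma_j^2,\qquad \gamma_j = 2/(j+1).
\]
Multiplying both sides by $(j+1)^2$ yields $(j+1)^2 u_j + 2(j+1)\,a_j + \frac{C(j+1)^2}{6}d_j \leq (j^2-1)\,u_{j-1} + 4D$, and since $j^2-1 < j^2 = ((j-1)+1)^2$, the inequality is in proper telescoping form. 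Summing from $j=k_0+1$ to $k$ and cancelling the overlapping $u_j$ terms produces
\[
(k+1)^2 u_k + \sum_{j=k_0+1}^k 2(j+1)\,a_j + \sum_{j=k_0+1}^k \frac{C(j+1)^2}{6}\,d_j \;\leq\; (k_0+1)^2 u_{k_0} + 4D(k-k_0),
\]
where $u_{k_0}$ is controlled by the first estimate of Lemma \ref{lemma3.4}.

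The feasibility rate then follows at once: drop the non-negative terms involving $u_k$ and $a_j$, apply Jensen's inequality to the convex map $y\mapsto\dist^2(y,\mathcal{X})$ with weights $(j+1)^2/\bar{S}_k$, and use $\bar{S}_k=\Theta(k^3)$. This gives $\mathbb{E}[\dist^2(\hat{x}_k,\mathcal{X})] = \mathcal{O}(D/(Ck^2)) = \mathcal{O}(B^2/(\mu^2 C^2 k^2))$, exactly the stated bound.

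The optimality bound is the delicate step. The obstacle is that the telescoped sum weights $a_j$ by $(j+1)$, whereas the averaging in $\hat{x}_k$ uses weights $(j+1)^2$, so direct Jensen on $y\mapsto\|y-x^*\|^2$ is not immediately available. The workaround is to bootstrap through a pointwise rate for $u_j$: applying the telescoped inequality with $k$ replaced by an intermediate index $j$ and dropping the non-negative sums yields $u_j \leq (k_0+1)^2 u_{k_0}/(j+1)^2 + 4D(j-k_0)/(j+1)^2 \leq \mathcal{O}(u_{k_0}/j^2 + D/j)$ for every $j>k_0$. Relation \eqref{rel2} from the proof of Lemma \ref{lemma3.4} supplies $a_j \leq u_{j-1}$, so $a_j \leq \mathcal{O}(u_{k_0}/j^2 + D/j)$. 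Plugging this pointwise bound into Jensen with weights $(j+1)^2/\bar{S}_k$, and using $\sum_{j} (j+1)^2/j \sim k^2/2$, $\sum_j (j+1)^2/j^2 \sim k$, and $\bar{S}_k \sim k^3/3$, gives
\[
\mathbb{E}[\|\hat{x}_k-x^*\|^2] \;\leq\; \frac{1}{\bar{S}_k}\sum_{j=k_0+1}^k (j+1)^2\,a_j \;=\; \mathcal{O}\!\left(\frac{D}{k+1}\right) \;=\; \mathcal{O}\!\left(\frac{B^2}{\mu^2\,C\,(k+1)}\right),
\]
which matches the claim. The remaining work is routine bookkeeping: tracking only the dominant terms in the constants and checking that the $u_{k_0}$ initialization from the first estimate of Lemma \ref{lemma3.4} does not inflate the final rate.
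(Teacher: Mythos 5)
Your proof is correct, and its skeleton is the paper's: take the recurrence of Lemma \ref{lemma3.4} for $j>k_0$, multiply by $(j+1)^2$, use $(1-\gamma_j)(j+1)^2=j^2-1\le j^2$ to telescope, and finish with Jensen using the weights $(j+1)^2/\bar S_k$; the feasibility bound is obtained exactly as in the paper (applying Jensen directly to $\dist^2(\cdot,\mathcal X)$ rather than through the auxiliary average $\hat w_k$ is an immaterial difference). Where you genuinely diverge is the optimality bound, i.e., the mismatch between the weight $(j+1)$ multiplying $\mathbb{E}[\|x_j-x^*\|^2]$ in the telescoped sum and the weight $(j+1)^2$ in $\hat x_k$. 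The paper resolves this in one line via $(j+1)\ge (j+1)^2/(k+1)$, which turns $2\sum_j (j+1)\mathbb{E}[\|x_j-x^*\|^2]$ into $\frac{2}{k+1}\sum_j (j+1)^2\mathbb{E}[\|x_j-x^*\|^2]\ge \frac{2\bar S_k}{k+1}\mathbb{E}[\|\hat x_k-x^*\|^2]$ and gives the $\mathcal{O}(B^2/(\mu^2 C_{\beta,c,B_h}(k+1)))$ rate directly. You instead bootstrap a pointwise rate $u_j=\mathbb{E}[\|v_j-x^*\|^2]\le \mathcal{O}(u_{k_0}/j^2+D/j)$ from the partially telescoped inequality, transfer it to $a_j=\mathbb{E}[\|x_j-x^*\|^2]$ via $a_j\le u_{j-1}$ (which is indeed what \eqref{rel2} gives), and then sum against the $(j+1)^2$ weights; the summation estimates are right and yield the same $\mathcal{O}(D/(k+1))$ with $D=\mathcal{O}(B^2/(\mu^2 C_{\beta,c,B_h}))$. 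Your route is slightly longer but buys more: it produces last-iterate-type rates for $\mathbb{E}[\|v_j-x^*\|^2]$ and $\mathbb{E}[\|x_j-x^*\|^2]$ of order $\mathcal{O}(1/j)$, not just rates for the average. One small point to clean up: for $j=k_0+1$ the transfer $a_j\le u_{j-1}=u_{k_0}$ does not fit the stated form $\mathcal{O}(u_{k_0}/j^2+D/j)$ with a universal constant (the constant would involve $(k_0+1)^2$), but this single term contributes only $\mathcal{O}(u_{k_0}k_0^2/k^3)$ after weighting, which is subdominant, so the final rates are unaffected.
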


\begin{proof}
From Lemma \ref{lemma3.4}, the following recurrence is valid for   any $k>k_0$:  
	\begin{align*}
		&\mathbb{E}[\|v_k - x^*\|^2] +\gamma_{k} \mathbb{E}[\| x_k - x^*\|^2] + \frac{C_{\beta,c,B_h}}{6} \mathbb{E}[\dist^2(x_{k}, \mathcal{X})]\\
		& \qquad\qquad\qquad \leq \left(1 - \gamma_{k}\right)\mathbb{E}[\|v_{k-1}- x^*\|^2] + \left( 1 + \frac{6}{C_{\beta,c,B_h} } \right) \frac{16 B^2}{\mu^2}\gamma_{k}^2.
	\end{align*}
From definition of $\gamma_{k}=\frac{2}{k+1}$
%we further get:
%	\begin{align*}
%		&\mathbb{E}[\|v_k - \bar{v}_{k}\|^2] +\frac{2}{k+1} \mathbb{E}[\| \Pi_{\mathcal X} (x_k) - \bar{x}_k\|^2] + \frac{1}{3} C_{\beta,c,B_h}\mathbb{E}[\dist^2(x_{k}, \mathcal{X})]\\
%		& \qquad\qquad\qquad \leq \left(1 - \frac{2}{k+1}\right)\mathbb{E}[\|v_{k-1}- \bar{v}_{k-1}\|^2] +\frac{16}{\mu^2}\frac{4}{(k+1)^2} B^2\\
%		& \qquad\qquad\qquad = \left( \frac{k-1}{k+1}\right) \mathbb{E}[\|v_{k-1}- \bar{v}_{k-1}\|^2] +\frac{64}{\mu^2}\frac{1}{(k+1)^2} B^2.
%	\end{align*}
and multiplying  the whole inequality with $(k+1)^2$, we get:
	\begin{align*}
		&(k+1)^2\mathbb{E}[\|v_k - x^*\|^2] +2(k+1) \mathbb{E}[\| x_k - x^*\|^2] + \frac{C_{\beta,c,B_h}}{6} (k+1)^2\mathbb{E}[\dist^2(x_{k}, \mathcal{X})]\\
		&  \leq k^2 \mathbb{E}[\|v_{k-1}- x^*\|^2] + \left( 1 + \frac{6}{C_{\beta,c,B_h} } \right) \frac{64}{\mu^2} B^2.
	\end{align*}
Summing  this inequality from $k_0+1$ to $k$, we get:
	\begin{align*}
		&{(k+1)^2}\mathbb{E}[\|v_k - x^*\|^2]+ 2 \sum_{j=k_0+1}^{k}(j+1) \mathbb{E}[\| x_j - x^* \|^2]\\
		& + \frac{C_{\beta,c,B_h}}{6} \sum_{j=k_0+1}^{k}(j+1)^2  \mathbb{E}[ \dist^2(x_j,\mathcal{X})] \\
		& \leq (k_0+1)^2 \mathbb{E}[\|v_{k_0} -x^*\|^2] + \left( 1 + \frac{6}{C_{\beta,c,B_h} } \right) \frac{64 B^2}{\mu^2} (k-k_0).
	\end{align*}
	By linearity of the expectation operator, we further have: 
	\begin{align}\label{expectLinear}
		&{(k+1)^2}\mathbb{E}[\|v_k - \bar{v}_{k}\|^2]+ \frac{2}{(k+1)}\mathbb{E} \left[\sum_{j=k_0+1}^{k}(j+1)^2 \| x_j - x^*\|^2\right]\nonumber\\
		& + \frac{C_{\beta,c,B_h}}{6}  \mathbb{E}\left[\sum_{j=k_0+1}^{k}(j+1)^2  \dist^2(x_j,\mathcal{X}) \right] \\
		& \leq (k_0+1)^2  \mathbb{E}[\|v_{k_0} -\bar{v}_{k_0}\|^2] + \left( 1 + \frac{6}{C_{\beta,c,B_h} } \right) \frac{64 B^2}{\mu^2} (k-k_0)\nonumber.
	\end{align}
and using convexity of the norm, we get: 
	\begin{align*}
		&{(k+1)^2}\mathbb{E}[\|v_k - x^*\|^2]+ \frac{2 \bar{S}_k}{(k+1)}\mathbb{E} [\|\hat{x}_k- x^* \|^2] + \frac{\bar{S}_k C_{\beta,c,B_h}}{6}  \mathbb{E}[\|\hat{w}_k-\hat{x}_k\|^2]  \nonumber\\
		& \leq (k_0+1)^2 \mathbb{E}[\|v_{k_0} -x^* \|^2] + \left( 1 + \frac{6}{C_{\beta,c,B_h} } \right) \frac{64 B^2}{\mu^2} (k-k_0), 
	\end{align*}
%where $	\hat{x}_k^* = {S_k}^{-1} \sum_{j=k_0+1}^{k} (j+1)^2 \bar{x}_j \in \mathcal{X}^*$. 
After some simple calculations and keeping only the dominant terms, we get the following convergence rate for the average sequence $\hat{x}_k$ in terms of optimality:
	\begin{align*}
		&\mathbb{E} [\|\hat{x}_k - x^*\|^2] \leq \mathcal{O} \left( \frac{B^2}{\mu^2 C_{\beta,c,B_h} (k+1)} \right), \\
		&\mathbb{E}[\|\hat{w}_k-\hat{x}_k\|^2] \leq \mathcal{O}\left( \frac{B^2}{\mu^2C_{\beta,c,B_h}^2 (k+1)^2}\right).
	\end{align*}
Since $\hat{w}_k \in \mathcal{X}$,  we get the following convergence rate for the average sequence $\hat{x}_k$ in terms of  feasibility violation: 
	\begin{align*}
		\mathbb{E}[\dist^2(\hat{x}_k,\mathcal{X})] &\leq \mathbb{E}[\|\hat{w}_k-\hat{x}_k\|^2] \leq \mathcal{O}\left( \frac{B^2}{\mu^2C_{\beta,c,B_h}^2 (k + 1)^2}\right). 
	\end{align*}
%Furthermore, since $	\hat{x}_k^* \in \mathcal{X}^*$ and  using the inequality $\|a+b\|^2 \leq 2 \|a\|^2 +2\|b\|^2$, we also get convergence rate for the average sequence $\hat{x}_k$ in terms of optimality:
%	\begin{align*}
%		\mathbb{E}[\dist^2(\hat{x}_k,\mathcal{X}^*)] & \leq \mathbb{E} [\| \hat{x}_k - 	\hat{x}_k^* \|^2] \leq 2\mathbb{E} [\|\hat{w}_k-\hat{x}_k^* \|^2] + 2\mathbb{E}[\|\hat{w}_k-\hat{x}_k\|^2]\\
%		&  \leq \mathcal{O}\left(   \frac{B^2}{\mu^2C_{\beta,c,B_h} (k^2 + k k_0 + k_0^2)} +   \frac{B^2}{\mu^2 (k+1)}   \right).
%	\end{align*}
This proves our statements. 
\end{proof}

\noindent Recall the expression of $C_{\beta,c,B_h}$:
\begin{align*}
	C_{\beta,c,B_h}= \left(\frac{\beta(2-\beta)}{cB_h^2}\right)\left(1-\frac{\beta(2-\beta)}{cB_h^2}\right)^{-1} 
= \left(\frac{\beta(2-\beta)}{cB_h^2 - \beta(2-\beta)}\right).
\end{align*}
For the particular choice of the stepsize  $\beta =1$, we have:
\begin{align*}
	C_{1,c,B_h}= \left(\frac{1}{cB_h^2 - 1}\right) > 0,
\end{align*}
since  we always have $cB_h^2 >1$.  Using this expression  in the convergence rates of Theorem \ref{th:strcase2}, we obtain: 
	\begin{align*}
		&\mathbb{E}\left[\dist^2(\hat{x}_k, \mathcal{X})\right]  \leq \mathcal{O}\left(\frac{B^2 (cB_h^2 - 1)^2 }{\mu^2 (k + 1)^2}  \right),\\
		&\mathbb{E}\left[\|\hat{x}_k - x^*\|^2 \right]  \leq \mathcal{O}\left(  \frac{B^2(cB_h^2 - 1)}{\mu^2 (k+1)}\right).
	\end{align*}

\noindent We can easily notice from Lemma \ref{lemma3.4} that for $B=0$ we can get better convergence rates. More specifically, for this particular case, taking constant stepsize, we get linear rates for the last iterate $x_k$ in terms of optimality and feasibility violation. We state this result in the next corollary.
\begin{corollary}
Under the assumptions of Theorem \ref{th:strcase2}, with $B=0$, the last iterate $x_k$  generated by SSP algorithm with constant stepsize $\alpha_k \equiv  \alpha < \min(1/L, 4/\mu)$ converges linearly in terms of optimality and feasibility violation.
\end{corollary}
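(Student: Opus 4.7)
The plan is to specialize the intermediate recurrence \eqref{eq:1way} obtained in the proof of Lemma \ref{lemma3.4} to the case $B = 0$ and constant stepsize $\alpha_k \equiv \alpha$. I would first check that the two conditions $\alpha < 1/L$ and $\alpha < 4/\mu$ are exactly what the argument requires: the first ensures $2-\alpha L \geq 1$, which is the step used in deriving \eqref{eq:1way}, and the second ensures that $\rho := 1 - \mu\alpha/4$ lies in $(0,1)$, so the prefactor $\rho$ of $\mathbb{E}[\|v_{k-1}-x^*\|^2]$ becomes a genuine contraction factor. Substituting $B = 0$ and a constant stepsize $\alpha$ into \eqref{eq:1way} directly yields
\begin{equation*}
\mathbb{E}[\|v_k - x^*\|^2] + \tfrac{\mu\alpha}{4}\,\mathbb{E}[\|x_k - x^*\|^2] + \tfrac{C_{\beta,c,B_h}\,\rho}{2}\,\mathbb{E}[\dist^2(x_k,\mathcal{X})] \leq \rho\,\mathbb{E}[\|v_{k-1} - x^*\|^2].
\end{equation*}

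The strategy from this point is to exploit the inequality in two different ways. Dropping the two nonnegative middle terms on the left and iterating immediately gives the geometric bound $\mathbb{E}[\|v_k - x^*\|^2] \leq \rho^k\,\|v_0 - x^*\|^2$, so the shadow sequence $v_k$ converges linearly to $x^*$. Plugging this bound back into the same recurrence and this time isolating $\mathbb{E}[\|x_k - x^*\|^2]$ yields
\begin{equation*}
\mathbb{E}[\|x_k - x^*\|^2] \leq \tfrac{4}{\mu\alpha}\,\rho^k\,\|v_0 - x^*\|^2,
\end{equation*}
which is linear convergence of the last iterate in distance. Isolating the distance-to-feasibility term instead gives
\begin{equation*}
\mathbb{E}[\dist^2(x_k,\mathcal{X})] \leq \tfrac{2}{C_{\beta,c,B_h}}\,\rho^{k-1}\,\|v_0 - x^*\|^2,
\end{equation*}
i.e.\ linear decay of the expected feasibility violation. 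For a functional-value statement, I would combine convexity with Cauchy--Schwarz and Jensen's inequality applied to the bounded-gradient condition with $B=0$ (which, together with $\|\nabla F(x)\|^2 \le \mathbb{E}[\|\nabla F(x,\zeta)\|^2] \le L(F(x)-F^*)$, yields $F(x_k)-F^* \leq L\,\|x_k - x^*\|^2$); this promotes the linear rate in squared distance to a linear rate for $\mathbb{E}[F(x_k)-F^*]$.

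I do not anticipate a genuine obstacle: the heavy analytical work was already done in Lemma \ref{lemma3.4}, and the only care needed is to avoid discarding the two negative terms in \eqref{eq:1way} prematurely, since those are precisely the quantities one wants to control. The mildly delicate point is to confirm that the derivation of \eqref{eq:1way} only relied on $\alpha \leq 1/L$ (via $2-\alpha L \geq 1$) and on $1 - \mu\alpha/4 > 0$, rather than on the particular decreasing schedule used in the statement of Lemma \ref{lemma3.4}; once that is verified the entire argument goes through with a constant stepsize.
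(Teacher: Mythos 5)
Your proposal is correct and follows essentially the same route as the paper: both specialize the recurrence \eqref{eq:1way} to $B=0$ and a constant stepsize $\alpha<\min(1/L,4/\mu)$, note that $1-\mu\alpha/4\in(0,1)$ acts as a contraction factor for $\mathbb{E}[\|v_k-x^*\|^2]$, and then read off the linear rates $\mathbb{E}[\|x_k-x^*\|^2]\le \frac{4}{\mu\alpha}(1-\mu\alpha/4)^k\|v_0-x^*\|^2$ and $\mathbb{E}[\dist^2(x_k,\mathcal{X})]\le \frac{2}{C_{\beta,c,B_h}}(1-\mu\alpha/4)^{k-1}\|v_0-x^*\|^2$, exactly as in the paper's proof. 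Your additional remarks (verifying that \eqref{eq:1way} only needs $\alpha\le 1/L$ and $1-\mu\alpha/4>0$, and the bound $F(x_k)-F^*\le L\|x_k-x^*\|^2$ obtained from Assumption \ref{assumption1} with $B=0$ to get a linear rate in function values) are valid refinements that the paper does not spell out but do not change the argument.
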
	

\begin{proof}
\red{When  $B=0$ and the stepsize satisfies $\alpha_k = \alpha < \min(1/L, 4/\mu)$, from \eqref{eq:1way},  we obtain:
	\begin{align*}
		\mathbb{E}[\|v_{k} - x^*\|^2]&  + \frac{\mu \alpha}{4} \mathbb{E}[\| x_k - x^*\|^2] + \frac{1}{2} C_{\beta,c,B_h}\left(1 - \frac{\mu \alpha}{4} \right) \mathbb{E}[\dist^2(x_{k}, \mathcal{X})] \\
		& \le \left(1 - \frac{\mu \alpha}{4} \right) \mathbb{E}[\| v_{k-1} - x^*\|^2].
	\end{align*}
%\begin{align*}
%  \mathbb{E}[\|x_{k+1}  - \bar{x}_{k+1}\|^2]  + C_{\beta,c,B_h}\mathbb{E}[ \dist^2(x_{k+1}, \mathcal{X})]  \leq	\mathbb{E}[\|v_k - \bar{v}_{k}\|^2]  \leq \left(  1 - \frac{\mu \alpha}{2}  \right) \mathbb{E}[\|x_{k} - \bar{x}_{k}\|^2].  	
%\end{align*}
}	
Then, since  $1 - \mu \alpha/4 \in (0,1)$, we get immediately that:
\begin{align*}
	& \mathbb{E}[\|x_{k}  - x^*\|^2]  \leq \left(  1 - \frac{\mu \alpha}{4}  \right)^k \frac{4}{\mu \alpha}\|v_{0} - x^*\|^2,\\
	& \frac{1}{2} C_{\beta,c,B_h}\mathbb{E}[\dist^2(x_{k}, \mathcal{X})] \leq  \left(  1 - \frac{\mu \alpha}{4}  \right)^{k-1} \|v_{0} - x^*\|^2, 	
\end{align*}
which proves our statements. 
\end{proof}

\noindent  Note that  in \citep{Nec:20} it has been proved   that stochastic first order methods are converging linearly on optimization problems of the form \eqref{eq:prob}  without functional constraints and satisfying Assumption \ref{assumption1} with $B=0$.  This paper extends this result to a stochastic subgradient projection method on optimization problems with functional constraints \eqref{eq:prob} satisfying Assumption \ref{assumption1} with $B=0$.  To the best of our  knowledge,  these convergence rates are new  for stochastic subgradient  projection methods applied on the general class of optimization problems \eqref{eq:prob}.   In Section 4 we provide an example of an  optimization problem with functional constraints, that is the  constrained linear least-squares,  which satisfies   Assumption \ref{assumption1} with $B=0$.

\section{Stochastic subgradient  for constrained least-squares}
In this section we consider the problem of finding a solution  to a system of  linear equalities and inequalities, see also equation $(11)$ in \citep{LevLew:10}: 
\begin{align}\label{P:least-square}
	 \text{find}  \;  x \in \mathcal{Y} : \;  Ax = b,  \; Cx \leq d,
\end{align}  
where $A  \in \rset^{m \times n}$, $C \in \rset^{p \times n}$ and $\mathcal{Y}$ is a simple polyhedral set.  We assume that this system is consistent, i.e. it has at least one solution.  This problem can be reformulated equivalently  as a particular case of the optimization problem with functional constraints  \eqref{eq:prob}: 
\begin{align}
	\label{prob2}
	 & \min_{x\in \mathcal{Y}}\; f(x) \;\; \left( :=\frac{1}{2} \mathbb{E} \left[\|A_\zeta^T x - b_\zeta\|^2\right] \right )\\
	& \text{subject to} \;\;  C_\xi^T x - d_\xi \leq 0  \quad  \forall \xi \in \Omega_2,  \nonumber
\end{align}
where  $A_\zeta^T$ and $C_\xi^T$ are (block) rows partitions of matrices $A$ and $C$, respectively.  Clearly,  problem (\ref{prob2}) is a particular case of problem \eqref{eq:prob}, with $f(x,\zeta) = \frac{1}{2}\|A_\zeta^T x - b_\zeta\|^2$, $g(x,\zeta) = 0$, and $h(x,\xi) = C_\xi^T x - d_\xi$ (provided that $C_{\xi}$ is a row of $C$).  Let us  define the polyhedral subset partitions $\mathcal{C}_{\xi} = \{ x \in \rset^n: C_\xi^T x - d_\xi \leq 0 \}$ and $\mathcal{A}_{\zeta} = \{ x \in \rset^n: A_\zeta^T x - b_\zeta = 0 \}$. In this case the feasible set  is the  polyhedron  $\mathcal{X} =\{x \in \mathcal{Y}: \; C x \leq  d \}$ and the optimal set  is the  polyhedron: 
$$\mathcal{X}^* = \{x \in \mathcal{Y}:  \;  Ax=b, \;  Cx \leq d \}  = \mathcal{Y} \cap_{\zeta \in \Omega_1}  \mathcal{A}_{\zeta}   \cap_{\xi \in \Omega_2}  \mathcal{C}_{\xi}.  $$ Note that for the particular problem \eqref{prob2} Assumption \ref{assumption1} holds with $B=0$ and e.g.  $L=2 \max_\zeta \|  A_\zeta\|^2$, since $f^*=0$ and  we have:    
\[    \mathbb{E} [ \| \nabla f(x, \zeta) \|^2  ]  =   \mathbb{E} [ \| A_\zeta  (A_\zeta^T x -  b_\zeta) \|^2  ]   \leq 
 ( 2 \max_\zeta \|  A_\zeta\|^2 ) \left(\frac{1}{2} \mathbb{E} \left[\|A_\zeta^T x - b_\zeta\|^2\right] \right) = L \,  f(x). \]
It is also obvious that Assumption \ref{assumption3} holds, since the functional constraints are linear. Moreover,   for the constrained  least-squares problem,  we replace Assumptions \ref{assumption2} and  \ref{assumption4} with the well-known Hoffman  property  of a polyhedral set, see  Example 3 from Section 2 and also   \citep{PenVer:18,LevLew:10,NecNes:15}:
\begin{align}
	\label{hoffman}
	\dist^2(u,  \mathcal{X}^*)  \leq  c \cdot  \mathbb{E} \left[  	\dist^2(u,  \mathcal{A}_{\zeta}) +   	\dist^2(u,  \mathcal{C}_{\xi})  \right]  \quad \forall u \in \mathcal{Y}, 
\end{align}
for some $c  \in (0, \infty)$.    Recall that  the Hoffman  condition \eqref{hoffman}  always holds  for \textit{nonempty} polyhedral sets  \citep{PenVer:18}.
%\textcolor{red}{
%\begin{lemma}
%	\label{lemma:hofman}
%	For the nonempty  polyhedral set $\mathcal{X}^* = \{x \in\mathcal{Y}:  \;  Ax=b, \;  Cx \leq d \}$ there exists a constant $c > 0$, independent of $b$ and $d$, with the following property:
%	\begin{align}
%		\label{hoffman}
%		\dist^2(u,  \mathcal{X}^*)  \leq  c \left( \|Au - b\|^2  +  \|(Cu - d)_+ \|^2 \right)  \quad \forall u \in \mathcal{Y}. 
%	\end{align}
%\end{lemma}}
 For the constrained least-squares problem the SSP algorithm  becomes:
\begin{center}
	\noindent\fbox{%
		\parbox{11cm}{%
			\textbf{Algorithm 2 (SSP-LS)}:\\
			$\text{Choose} \; x_0 \in \mathcal{Y}, \; \text{stepsizes} \; \alpha_k>0 $  and $\beta \in (0, 2)$\\
			$\text{For} \; k \geq 0 \;  \text{repeat:}$
			\begin{align*}
				& \text{Sample independently} \;   \zeta_k \sim \textbf{P}_1 \;\text{and}\; \xi_k\sim \textbf{P}_2  \; \text{and update:}  \\
				& v_k = x_k - \alpha_k A_{\zeta_{k}} (A_{\zeta_{k}}^T x_{k} - b_{\zeta_{k}}) \\
				& z_k = (1-\beta)v_k + \beta \Pi_{\mathcal{C}_{\xi_k}}(v_k)\\
				& x_{k+1} = \Pi_{\mathcal{Y}}(z_k).
			\end{align*}
		}%
	}
\end{center}

\noindent  Note that  the update for $v_k$ can be written as  step \eqref{eq:algstep1} in SSP for $f(x,\zeta) = \frac{1}{2}\|A_\zeta^T x - b_\zeta\|^2$ and $g=0$.  In contrast to the previous section however, here  we  consider an adaptive  stepsize:
\begin{align*}
	\alpha_k = \delta \frac{\|A_{\zeta_{k}}^T x_{k} - b_{\zeta_{k}}\|^2}{\|A_{\zeta_{k}}(A_{\zeta_{k}}^T x_{k} - b_{\zeta_{k}})\|^2}, \;\; \text{where} \;\; \delta \in (0,2).
\end{align*}
Note that when $C_{\xi}$ is a row of $C$, then $z_k$ has the explicit expression:
\[ z_k = v_k - \beta \frac{(C_{\xi_{k}}^T v_k - d_{\xi_{k}})_+}{\|C_{\xi_{k}}\|^2} C_{\xi_{k}}, \]
which coincides with step \eqref{eq:algstep2} in SSP for $h(x,\xi) = C_\xi^T x - d_\xi$.  
Note that we can use  e.g., probability distributions dependent on the (block) rows of  matrices $A$ and $C$:
\begin{align*}
	\textbf{P}_1(\zeta = \zeta_k) = \frac{\|A_{\zeta_k}\|^2_F}{\|{A}\|_F^2}\;\; \text{and}\;\; \textbf{P}_2(\xi = \xi_k) = \frac{\|C_{\xi_k}\|^2_F}{\|{C}\|_F^2},
\end{align*} 
where $\|\cdot\|_F$ denotes the Frobenius norm of a matrix.   Note that our algorithm SSP-LS is different from Algorithm $4.6$ in \citep{LevLew:10} through the  choice of the stepsize  $\alpha_k$, of  the sampling rules and of the update law for  $x_k$ and it is more general as it allows to work with block of rows of the matrices $A$ and $C$.  Moreover,  SSP-LS includes the classical Kaczmarz’s method when solving linear systems of equalities.   In the next section we derive linear convergence rates for SSP-LS algorithm, provided that the system of equalities and inequalities is consistent, i.e. $\mathcal{X}^*$ is  nonempty.

%%%%%%%%%%%%%%%%%%%%%%%%%%%%

\subsection{Linear convergence}
\noindent In this section we prove linear convergence for the sequence generated by the SSP-LS algorithm for solving the constrained  least-squares problem \eqref{prob2}. 	Let us define  \textit{maximum block condition number} over all the submatrices $A_{\zeta}$:
\[  \kappa_{\text{block}}  = \max_{ \zeta \sim \textbf{P}_1}  \| A_{\zeta}^T \|   \cdot  \| (A_{\zeta}^T)^\dagger \|,  \]
where  $(A_{\zeta}^T)^\dagger$  denotes  the pseudoinverse of $A_{\zeta}^T$. Note that  if $A_{\zeta}^T$ has full  rank, then $(A_{\zeta}^T)^\dagger  = A_{\zeta} (A_{\zeta}^T A_{\zeta})^{-1} $.  Then, we have the following result. 

\begin{theorem}
	Assume that the polyhedral set $\mathcal{X}^* = \{x \in \mathcal{Y}:  \;  Ax=b, \;  Cx \leq d \}$  is nonempty. Then, we have the following linear rate of convergence for the sequence $x_k$ generated by the SSP-LS algorithm: 
	\begin{align*}
		&\mathbb{E}\left[{\dist^2(x_{k}, \mathcal{X}^*)}\right] \leq \left( 1 - \frac{1}{c} \min \left( \frac{\delta(2 - \delta)}{2  \kappa_{\text{block}}^2},   \frac{2 - \delta}{4 \delta}, \frac{\beta (2 - \beta)}{2} \right) \right)^k \dist^2(x_{0}, \mathcal{X}^*). 
	\end{align*}
\end{theorem}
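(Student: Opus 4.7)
The plan is to build a one-step contraction for $\dist^2(x_k,\mathcal{X}^*)$ by handling the linear equality step (which produces $v_k$) and the inequality projection step (which produces $z_k$, then $x_{k+1}$) separately, then stitching them together with the Hoffman bound \eqref{hoffman}. Throughout I would work with $x^* := \Pi_{\mathcal{X}^*}(x_k)$, which lies in $\mathcal{Y}\cap\mathcal{A}_{\zeta_k}\cap\mathcal{C}_{\xi_k}$, so that $A_{\zeta_k}^T x^*=b_{\zeta_k}$ and $x^*$ is feasible for every step's projection.

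First, for the $v_k$ step, writing $v_k - x^* = (I - \alpha_k A_{\zeta_k} A_{\zeta_k}^T)(x_k-x^*)$ and plugging in the explicit adaptive stepsize $\alpha_k$, a direct expansion yields the exact identity $\|v_k-x^*\|^2 = \|x_k-x^*\|^2 - \delta(2-\delta) t_{\zeta_k}$ and $\|v_k-x_k\|^2 = \delta^2 t_{\zeta_k}$, where $t_{\zeta_k} = \|A_{\zeta_k}^T x_k - b_{\zeta_k}\|^4/\|A_{\zeta_k}(A_{\zeta_k}^T x_k - b_{\zeta_k})\|^2$. Second, for the feasibility step, combining the nonexpansiveness of $\Pi_\mathcal{Y}$ (since $x^*\in\mathcal{Y}$) with the relaxed projection identity applied at $y=x^*\in\mathcal{C}_{\xi_k}$ gives $\|x_{k+1}-x^*\|^2 \leq \|v_k-x^*\|^2 - \beta(2-\beta)\dist^2(v_k,\mathcal{C}_{\xi_k})$. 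Chaining these two yields
\begin{align*}
\dist^2(x_{k+1},\mathcal{X}^*) \leq \dist^2(x_k,\mathcal{X}^*) - \delta(2-\delta)t_{\zeta_k} - \beta(2-\beta)\dist^2(v_k,\mathcal{C}_{\xi_k}).
\end{align*}

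The main obstacle is translating the two decrease quantities into $\dist^2(x_k,\mathcal{A}_{\zeta_k})$ and $\dist^2(x_k,\mathcal{C}_{\xi_k})$, so that Hoffman's bound \eqref{hoffman} can be invoked. For the equality piece, since $A_{\zeta_k}^T x_k - b_{\zeta_k}$ lies in $\range(A_{\zeta_k}^T)$ and $x_k - \Pi_{\mathcal{A}_{\zeta_k}}(x_k)$ lies in $\range(A_{\zeta_k})$, two singular-value estimates give $t_{\zeta_k}\geq \dist^2(x_k,\mathcal{A}_{\zeta_k})/(\|A_{\zeta_k}^T\|^2\|(A_{\zeta_k}^T)^\dagger\|^2) \geq \dist^2(x_k,\mathcal{A}_{\zeta_k})/\kappa_{\text{block}}^2$. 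For the inequality piece the obstacle is that we have $\dist^2(v_k,\mathcal{C}_{\xi_k})$, not $\dist^2(x_k,\mathcal{C}_{\xi_k})$. I would use the triangle inequality together with Young's inequality $(a-b)^2\geq (1-\epsilon)a^2 - (1/\epsilon - 1)b^2$ to obtain
\begin{align*}
\dist^2(v_k,\mathcal{C}_{\xi_k}) \geq (1-\epsilon)\dist^2(x_k,\mathcal{C}_{\xi_k}) - (1/\epsilon - 1)\|x_k-v_k\|^2,
\end{align*}
substituting $\|x_k-v_k\|^2 = \delta^2 t_{\zeta_k}$. The cost term then eats some of the $\delta(2-\delta)t_{\zeta_k}$ budget, so $\epsilon$ must be chosen to balance the loss. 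Selecting $\epsilon = 2\beta(2-\beta)\delta / \bigl(2\beta(2-\beta)\delta + 2-\delta\bigr)$ exactly halves the coefficient of $t_{\zeta_k}$ (to $\delta(2-\delta)/2$), and leaves the coefficient of $\dist^2(x_k,\mathcal{C}_{\xi_k})$ equal to $(2-\delta)\beta(2-\beta)/(2\beta(2-\beta)\delta+2-\delta)$; splitting by whether $2\beta(2-\beta)\delta\lessgtr 2-\delta$ shows this is at least $\min\bigl(\beta(2-\beta)/2,\,(2-\delta)/(4\delta)\bigr)$.

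The final step is routine: take conditional expectation over $(\zeta_k,\xi_k)$, replace the three coefficients by their common minimum, apply Hoffman \eqref{hoffman} to bound $\mathbb{E}_\zeta\dist^2(x_k,\mathcal{A}_{\zeta_k}) + \mathbb{E}_\xi\dist^2(x_k,\mathcal{C}_{\xi_k}) \geq \dist^2(x_k,\mathcal{X}^*)/c$, and iterate. This delivers the contraction factor $1 - \tfrac{1}{c}\min\bigl(\tfrac{\delta(2-\delta)}{2\kappa_{\text{block}}^2},\,\tfrac{2-\delta}{4\delta},\,\tfrac{\beta(2-\beta)}{2}\bigr)$ exactly as stated. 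The two steps that require care are the singular-value bound relating $t_{\zeta_k}$ to $\dist^2(x_k,\mathcal{A}_{\zeta_k})$ (which is where $\kappa_{\text{block}}$ enters) and the specific choice of $\epsilon$ in Young's inequality (which is where the term $(2-\delta)/(4\delta)$ appears in the final minimum).
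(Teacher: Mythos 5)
Your proposal is correct and follows essentially the same route as the paper: a one-step Fej\'er-type contraction combining the exact decrease of the adaptive Kaczmarz step with the relaxed-projection decrease, then converting the two decrease terms into $\dist^2(x_k,\mathcal{A}_{\zeta_k})$ (via the pseudoinverse/operator-norm bounds that produce $\kappa_{\text{block}}$) and $\dist^2(x_k,\mathcal{C}_{\xi_k})$ (via a triangle-inequality splitting), followed by conditional expectation, the Hoffman bound \eqref{hoffman}, and iteration. The only cosmetic difference is that you tune the Young parameter $\epsilon$ to halve the equality-step budget, whereas the paper uses the fixed factor-$2$ split $\dist^2(x_k,\mathcal{C}_{\xi_k})\le 2\|x_k-v_k\|^2+2\dist^2(v_k,\mathcal{C}_{\xi_k})$ and a min over grouped coefficients; both yield exactly the stated constant $\min\bigl(\tfrac{\delta(2-\delta)}{2\kappa_{\text{block}}^2},\tfrac{2-\delta}{4\delta},\tfrac{\beta(2-\beta)}{2}\bigr)$.
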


\begin{proof}
 From the  updates of the  sequences   $x_{k+1}$, $z_k $ and $v_k$ in SSP-LS algorithm,  we have:
	\begin{align*}
		&  \|  x_{k+1} - \bar{x}_{k+1} \|^2	\leq  \|  x_{k+1} - \bar{x}_{k} \|^2 = \| \Pi_{\mathcal{Y}}(z_k) - \Pi_{\mathcal{Y}}(\bar{x}_k) \|^2 \leq \| z_k - \bar{x}_k \|^2 \\
		& =  \|  v_{k} - \bar{x}_{k} + \beta(\Pi_{\mathcal{C}_{\xi_k}}(v_k) - v_k)\|^2  \\
		& = \| v_k - \bar{x}_k \|^2 + \beta^2\| \Pi_{\mathcal{C}_{\xi_k}}(v_k) - v_k \|^2  + 2 \beta \langle v_k - \bar{x}_k , \Pi_{\mathcal{C}_{\xi_k}}(v_k) - v_k \rangle \\
		& =  \| x_k - \bar{x}_k \|^2 + \alpha_{k}^2 \|  A_{\zeta_{k}}(A_{\zeta_{k}}^T x_k - b_{\zeta_{k}}) \|^2  - 2 \alpha_k \langle x_k - \bar{x}_k ,  A_{\zeta_{k}}(A_{\zeta_{k}}^T x_k - b_{\zeta_{k}})  \rangle \\
		& \qquad + \beta^2\| \Pi_{\mathcal{C}_{\xi_k}}(v_k) - v_k \|^2  + 2 \beta \langle v_k - \bar{x}_k , \Pi_{\mathcal{C}_{\xi_k}}(v_k) - v_k \rangle. 
	\end{align*}		
	Using the definition of $\alpha_k$ and that  $A_{\zeta_{k}}^T(x_k - \bar{x}_k)= A_{\zeta_{k}}^Tx_k - b_{\zeta_{k}}$, we further get: 
	\begin{align*}
		\|  x_{k+1} - \bar{x}_{k+1} \|^2	& \leq  \|  x_{k} - \bar{x}_{k} \|^2   - \delta(2 - \delta)\frac{\|   A_{\zeta_{k}}^T x_k - b_{\zeta_{k}} \|^4} {\|  A_{\zeta_{k}}(A_{\zeta_{k}}^T x_k - b_{\zeta_{k}}) \|^2}  + \beta^2\| \Pi_{\mathcal{C}_{\xi_k}}(v_k) - v_k \|^2   \\
		& \quad + 2 \beta \langle v_k - \bar{x}_k , \Pi_{\mathcal{C}_{\xi_k}}(v_k) - v_k \rangle  \\ 
		& = \|  x_{k} - \bar{x}_{k} \|^2   - \delta(2 - \delta)\frac{\|   A_{\zeta_{k}}^T x_k - b_{\zeta_{k}} \|^4} {\|  A_{\zeta_{k}}(A_{\zeta_{k}}^T x_k - b_{\zeta_{k}}) \|^2} + \beta^2\| \Pi_{\mathcal{C}_{\xi_k}}(v_k) - v_k \|^2   \\
		& \quad  - 2 \beta \langle \Pi_{\mathcal{C}_{\xi_k}}(v_k) - v_k , \Pi_{\mathcal{C}_{\xi_k}}(v_k) - v_k \rangle +  2 \beta \langle   \Pi_{\mathcal{C}_{\xi_k}}(v_k) - \bar x_k, \Pi_{\mathcal{C}_{\xi_k}}(v_k) - v_k \rangle.
	\end{align*}	
    From the optimality condition of the projection  we always have $\langle  \Pi_{\mathcal{C}_{\xi_k}}(v_k) -z ,  \Pi_{\mathcal{C}_{\xi_k}}(v_k) - v_k  \rangle \leq 0 $  for all $ z \in \mathcal{C}_{\xi_k}$. Taking $z = \bar{x}_k \in \mathcal{X}^* \subseteq \mathcal{C}_{\xi_k}$ in the previous relation, we finally get:
    \begin{align}
        \label{eq:basic_LS}	
        & \|  x_{k+1} - \bar{x}_{k+1} \|^2  \\	
        & \leq \|  x_{k} - \bar{x}_{k} \|^2   - \delta(2 - \delta)\frac{\|   A_{\zeta_{k}}^T x_k - b_{\zeta_{k}} \|^4} {\|  A_{\zeta_{k}}(A_{\zeta_{k}}^T x_k - b_{\zeta_{k}}) \|^2}  -  \beta(2 - \beta) \|   \Pi_{\mathcal{C}_{\xi_k}} (v_k) - v_k\|^2.    \nonumber
	\end{align}
	From the definition of $v_k$ and $\alpha_k$, we have:
    \begin{align*}
	    v_k = x_k - \alpha_{k} A_{\zeta_{k}} (A_{\zeta_{k}}^T x_k - b_{\zeta_{k}})  & \iff \alpha_{k}^2 \|  A_{\zeta_{k}} (A_{\zeta_{k}}^T x_k - b_{\zeta_{k}}) \|^2 = \| v_k - x_k \|^2\\
	    & \iff \frac{\|   A_{\zeta_{k}}^T x_k - b_{\zeta_{k}} \|^4} {\|  A_{\zeta_{k}}(A_{\zeta_{k}}^T x_k - b_{\zeta_{k}}) \|^2} = \frac{1}{\delta^2}\| v_k - x_k \|^2 .
    \end{align*}
    Also, from the definition of $z_{k}$, we have:
    \begin{align}\label{def:zk}
    	\| \Pi_{\mathcal{C}_{\xi_k}} (v_k) - v_k\|^2 = \frac{1}{\beta^2} \| z_{k} - v_k \|^2 .
    \end{align}
    Now, replacing these two relations in (\ref{eq:basic_LS}), we get:
   \begin{align}\label{rec:LS}
	   & \|  x_{k+1} - \bar{x}_{k+1} \|^2 \nonumber\\	
	   & \quad \leq \|  x_{k} - \bar{x}_{k} \|^2   - \frac{\delta(2 - \delta)}{\delta^2} \|  v_k - x_k \|^2  -  \frac{\beta(2 - \beta)}{\beta^2} \|  z_{k} - v_k\|^2  \nonumber\\
	   & \quad \leq \|  x_{k} - \bar{x}_{k} \|^2   -   \frac{\delta(2 - \delta)}{2 \kappa_{\text{block}}^2}   \frac{ \kappa_{\text{block}}^2}{\delta^2} \|  v_k - x_k \|^2   \nonumber \\
	   & \qquad  - \min \left(    \frac{\delta(2 - \delta)}{4 \delta^2},  \frac{\beta(2 - \beta)}{2} \right) \left( 2 \|  v_k - x_k \|^2  +  \frac{2}{\beta^2} \|  z_{k} - v_k\|^2 \right).
   \end{align}
 First, let us consider the subset $\mathcal{C}_{\xi_{k}}$. Then, we have:
   \begin{align*}
   	\dist^2(x_k, \mathcal{C}_{\xi_{k}}) & = \| x_k - \Pi_{\mathcal{C}_{\xi_{k}}}(x_k) \|^2 \leq \| x_k - \Pi_{\mathcal{C}_{\xi_{k}}}(v_k) \|^2 \leq 2 \| x_k - v_k \|^2 + 2 \| v_k - \Pi_{\mathcal{C}_{\xi_{k}}}(v_k) \|^2 \\
   	& \overset{(\ref{def:zk})}{\leq} 2 \| x_k - v_k \|^2 + \frac{2}{\beta^2} \| v_k - z_{k} \|^2. 
   \end{align*}
Second, let us consider the subset  $\mathcal{A}_{\zeta_{k}}$.   Since the corresponding $A_{\zeta_{k}}^T$ represents a block of rows of matrix $A$, the update for $v_k$ in SSP-LS can be written as: 
\[ v_k = (1 - \delta) x_k + \delta T_{\zeta_{k}}(x_k), \]
where the operator  $T_{\zeta_{k}}$ is given by the following expression
\[ T_{\zeta_{k}}(x_k) = x_k - \frac{\| A_{\zeta_{k}}^T x_k - b_{\zeta_{k}} \|^2}{\| A_{\zeta_{k}}(A_{\zeta_{k}}^T x_k - b_{\zeta_{k}}) \|^2} A_{\zeta_{k}}(A_{\zeta_{k}}^T x_k - b_{\zeta_{k}}). \]
Further, the projection of $x_k$ onto the subset $\mathcal{A}_{\zeta_{k}}$ is,  see e.g., \citep{HorJon:12}:
 \[ \Pi_{\mathcal{A}_{\zeta_{k}}}(x_k) = x_k - (A_{\zeta}^T)^\dagger (A_{\zeta_{k}}^T x_k - b_{\zeta_{k}}). \] 
Hence, we have:
\begin{align*}
	\dist^2(x_k, \mathcal{A}_{\zeta_{k}}) & = \| x_k - \Pi_{\mathcal{A}_{\zeta_{k}}}(x_k) \|^2 = \| (A_{\zeta}^T)^\dagger (A_{\zeta_{k}}^T x_k - b_{\zeta_{k}}) \|^2 \\
	& \leq \| (A_{\zeta}^T)^\dagger \|^2 \| A_{\zeta_{k}}^T x_k - b_{\zeta_{k}} \|^2 \\
	& = \| (A_{\zeta}^T)^\dagger \|^2 \frac{\| A_{\zeta_{k}}^T x_k - b_{\zeta_{k}} \|^2}{\| A_{\zeta_{k}}(A_{\zeta_{k}}^T x_k - b_{\zeta_{k}}) \|^2}  \| A_{\zeta_{k}}(A_{\zeta_{k}}^T x_k - b_{\zeta_{k}}) \|^2 \\
	& \leq \|  (A_{\zeta_k}^T)^\dagger \|^2 \| A_{\zeta_{k}}^T \|^2 \frac{\| A_{\zeta_{k}}^T x_k - b_{\zeta_{k}} \|^4}{\| A_{\zeta_{k}}(A_{\zeta_{k}}^T x_k - b_{\zeta_{k}}) \|^2}\\
	& = \kappa_{\text{block}}^2 \| T_{\zeta_{k}}(x_k) - x_k \|^2 \\
	& = \frac{ \kappa_{\text{block}}^2 }{\delta^2} \|x_k - v_k\|^2.
\end{align*}
%Since, in this case $A_{\zeta_{k}}^T$ denotes a row of matrix $A$, the update for $v_k$ in SSP-LS can be written as  $v_k = (1 - \delta)x_k + \delta \Pi_{\mathcal{A}_{\zeta_{k}}} (x_k)$. Then,  we have:
%   \begin{align*}
%   	\dist^2(x_k, \mathcal{A}_{\zeta_{k}}) = \| x_k - \Pi_{\mathcal{A}_{\zeta_{k}}}(x_k) \|^2 = \frac{1}{\delta^2} \| x_k - v_{k} \|^2.
%   \end{align*}
Using these two relations  in (\ref{rec:LS}), we finally get the following recurrence:
  \begin{align}
  	  \label{eq:ls_rec}
  & 	\| x_{k+1} - \bar{x}_{k+1} \|^2 \\ 
  & \leq \|  x_{k} - \bar{x}_{k} \|^2   - \min \left( \frac{\delta(2 - \delta)}{2\kappa_{\text{block}}^2},   \frac{2 - \delta}{4 \delta}, \frac{\beta (2 - \beta)}{2} \right)   \left(  \dist^2(x_k, \mathcal{A}_{\zeta_{k}})   + 	\dist^2(x_k, \mathcal{C}_{\xi_{k}})    \right). \nonumber 
  \end{align} 
  Now, taking conditional expectation w.r.t.  $\mathcal{F}_{[k-1]}$  in \eqref{eq:ls_rec}  and using Hoffman inequality 	\eqref{hoffman},  we obtain:
  \begin{align*}
  & \mathbb{E}_{ \zeta_{k}, \xi_{k}} [ 	\| x_{k+1} - \bar{x}_{k+1} \|^2  | \mathcal{F}_{[k-1]} ] \\  
  & \leq \|  x_{k} - \bar{x}_{k} \|^2   - \frac{1}{c} \min \left( \frac{\delta(2 - \delta)}{2 \kappa_{\text{block}}^2},   \frac{2 - \delta}{4 \delta}, \frac{\beta (2 - \beta)}{2} \right) \dist^2(x_k, \mathcal{X^*}) \\
  	& = \left( 1 - \frac{1}{c} \min \left( \frac{\delta(2 - \delta)}{2 \kappa_{\text{block}}^2},   \frac{2 - \delta}{4 \delta}, \frac{\beta (2 - \beta)}{2} \right) \right) \|  x_{k} - \bar{x}_{k} \|^2.
  \end{align*}
Finally, taking full expectation,   recursively we get the statement of the theorem. 
\end{proof}
Note that for $\delta=\beta=1$ and $A_\zeta^T$   a single row of matrix $A$,   we have   $\kappa_{\text{block}} =1$ and we get a simplified estimate for linear convergence $\mathbb{E}\left[{\dist^2(x_{k}, \mathcal{X}^*)}\right] \leq  (1-1/(4c))^k \dist^2(x_{0}, \mathcal{X}^*)$, which is similar to convergence estimate for the algorithm in \citep{LevLew:10}.  In the block case,  for $\delta=\beta=1$ and assuming that   $\kappa_{\text{block}} \geq 2$, we get the linear convergence $\mathbb{E}\left[{\dist^2(x_{k}, \mathcal{X}^*)}\right] \leq  \left(1-1/(2c \kappa_{\text{block}}^2 ) \right)^k \dist^2(x_{0}, \mathcal{X}^*)$, i.e., our rate depends explicitly  on the geometric properties of the submatrices $A_\zeta$ and $C_\zeta$ (recall that both constants $c$ and $ \kappa_{\text{block}}$ are defined in terms of these submatrices).    From our best knowledge, this is the first time when such convergence bounds are obtained for a stochastic subgradient type algorithm solving constrained least-squares.

%%%%%%%%%%%%%%%%%%%%%%%%%%%%%

\section{Illustrative examples and numerical tests}
In this section, we present several applications where our algorithm can be applied,
such as the robust sparse SVM classification problem \citep{BhaGra:04},  sparse SVM  classification problem \citep{WesSch:03}, constrained least-squares and  linear programs \citep{Tib:11},  accompanied by detailed numerical simulations.  The codes were written
in Matlab and run on a PC with i7 CPU at 2.1 GHz and 16 GB memory.

%%%%%%%%%%%%%%%%%%

\subsection{Robust sparse SVM classifier} \label{sec:5.1}
We consider a two class  dataset $\{(z_i,y_i)\}_{i=1}^N$, where $z_i$ is the vector of features and $y_i \in \{-1, 1\}$ is the corresponding label. A robust classifier is a hyperplane  parameterized by a weight vector $w$ and  an offset from the origin $d$, in which the decision boundary and set of relevant features are resilient to uncertainty in the data, see  equation $(2)$ in \citep{BhaGra:04} for more details.   Then, the robust sparse classification problem can be formulated as:
\begin{align*}
	& \min_{w,d,u}  \;    \lambda \sum_{i=1}^N u_i +  \|w\|_1  \\ 
	& \text{subject to}:  \;  y_i (w^T \bar{z}_i + d) \geq 1 -u_i  \quad \forall \bar{z}_i \in {\cal Z}_i, \;\; u_i \geq 0 \quad  \forall i=1:N,
\end{align*} 
where ${\cal Z}_i$ is the uncertainty set in the data $z_i$, the parameter $\lambda>0$ and $1$-norm is added in the objective to induce sparsity in $w$ and \blue{$u_i$'s are slack variables that provide a mechanism for handling an error in the assigned class }.    To find a  hyperplane that is robust and generalized well, each ${\cal Z}_i$ would need to be specified  by a large corpus of pseudopoints. In particular, finding a robust  hyperplane can be simplified  by considering a data uncertainty model in the form of  ellipsoids.  In this case we can convert infinite number of linear constraints into a single non-linear constraint and thus recasting the above  set of robust linear inequalities  as second order cone constraints. Specifically, if the uncertainty set for $i$th data is defined by an ellipsoid with the center $z_i$ and the shape given by the positive semidefinite matrix $Q_i \succeq 0$, i.e. ${\cal Z}_i = \{ \bar{z}_i:  \langle Q_i(\bar{z}_i - z_i), \bar{z}_i - z_i \rangle \leq 1  \}$, then a solution to the robust hyperplane  classification  problem is one in which \blue{the hyperplane parameterized  by $(w,d)$ does not intersect any ellipsoidal data uncertainty model (see Appendix 1 for proof):}
\begin{align}   
y_i(w^T z_i + d) \geq \|Q_i^{-1/2} w\| \blue{ - u_i}.  
\label{eq:socc}
\end{align}
Hence,   the robust classification problem can be recast as a convex optimization problem with many functional constraints that are either linear or second order cone constraints:
\begin{align*}
	& \min_{w,d,u}  \; \lambda \sum_{i=1}^N u_i +  \|w\|_1  \\
	& \text{subject to}:   \;\;  y_i (w^T z_i + d) \geq 1 -u_i,  \;\;  \; u_i \geq 0 \quad   \forall i=1:N\\
	& \qquad  \qquad \quad \;\; y_i(w^T z_i + d) \geq \|Q_i^{-1/2} w\| \blue { - u_i} \;\; \forall i=1:N.
\end{align*}

\noindent This is a particular form of problem \eqref{eq:prob} and thus we can solve it using our algorithm SSP.   Since every one of the $N$ data points has its own covariance matrix $Q_i$, this formulation results in a large optimization problem so it is necessary to impose some restrictions on the shape of these matrices. Hence, we consider two scenarios: (i) class-dependent covariance matrices, i.e., $Q_i = Q_+$ if $y_i = +1$ or $Q_i = Q_-$ if $y_i = -1$; (ii) class-independent covariance matrix, i.e., $Q_i =Q_{\pm}$ for all $y_i$. For more details on the choice of  covariance matrices see \citep{BhaGra:04}. Here, each covariance matrix is assumed to be diagonal. In a class-dependent diagonal covariance matrix, the diagonal elements of $Q_+$ or $Q_-$ are unique, while in class-independent covariance matrix, all diagonal elements of $Q_{\pm}$ are identical. Computational experiments designed to evaluate the performance of SSP require datasets in which the level of variability associated with the data can be quantified. Here, a noise level parameter, $0\leq \rho \leq 1$, is introduced to scale each diagonal element of the covariance matrix, i.e., $\rho Q_+$ or $\rho Q_-$ or $\rho Q_{\pm}$.  When $\rho = 0$, data points are associated with no noise (the nominal case). The $\rho$ value acts as a proxy for data variability.  For classifying a point we consider the following rules. The “ordinary rule” for classifying a data point $z$ is as follows: if $w^T_* z + d_* >0$, then $z$ is assigned to the $+1$ class;  if $w^T_* z + d_* <0$, then $z$ is identified with the $-1$ class. An ordinary error occurs when the class predicted by the hyperplane differs from the known class of the data point. The “worst case rule” determines whether an ellipsoid with center $z$ intersects the hyperplane. Hence, some allowable values of $z$ will be classified incorrectly if $| w^T_* z  + d_* | < \| Q_i^{-1/2} w_* \|^2$ (worst case error).

\begin{table}[ht]
	\caption{Comparison between nominal and robust classifiers on training data: class-dependent  covariance matrix (first half), class-independent covariance matrix (second half).}
	\centering
	\label{Robust_SVM}
	\begin{tabular}{|c|c|c|c|c|c|c|}
		\hline
		\multirow{2}{*}{$\lambda$} & \multirow{2}{*}{$\rho$} & \multicolumn{3}{c|}{\textbf{Robust}}                                       & \multicolumn{2}{c|}{\textbf{Nominal}}                         \\ \cline{3-7} 
		&                                      & \textbf{$w_* \neq 0$} & worst case error & ordinary  error & \textbf{$w_* \neq 0$} & ordinary error \\ \hline
		0.1                         & \multirow{4}{*}{0.01}                & 1699                                          &      3      &       332    & 546                                           &    198       \\ \cline{1-1} \cline{3-7} 
		0.2                         &                                      & 406                                          &      25      &     116  & 767                                           &         113      \\ \cline{1-1} \cline{3-7} 
		0.3                         &                                      & 698                                           &      14      &     111       & 774                                           &         67     \\ \cline{1-1} \cline{3-7} \hline
		%		0.4                         &                                      & 769                                           &            &      57     & 825                                           &  59          \\ \hline
		%		0.5                         &                                      & 769                                           &        4    &     301       & 825                                           &  55          \\ \hline
		0.1                         & \multirow{4}{*}{0.3}                & 1581                                          &      63      &       331    & 546                                           &    198       \\ \cline{1-1} \cline{3-7} 
		0.2                         &                                      & 1935                                          &     86      &     326  & 767                                           &         113      \\ \cline{1-1} \cline{3-7} 
		0.3                         &                                      & 1963                                           &    77       &     311       & 774                                           &         67     \\ \cline{1-1} \cline{3-7} \hline\hline 
		0.1                         & \multirow{4}{*}{0.01}                & 1734                                          &      0      &       331    & 546                                           &    198       \\ \cline{1-1} \cline{3-7} 
		0.2                         &                                      & 1822                                           &      1      &     268  & 767                                           &         113      \\ \cline{1-1} \cline{3-7} 
		0.3                         &                                      & 1937                                           &      0      &     266       & 774                                           &         67     \\ \cline{1-1} \cline{3-7} \hline
		%		0.4                         &                                      & 769                                           &            &      57     & 825                                           &  59          \\ \hline
		%		0.5                         &                                      & 769                                           &        4    &     301       & 825                                           &  55          \\ \hline
		0.1                         & \multirow{4}{*}{0.3}                & 1629                                          &      19      &       316    & 546                                           &    198       \\ \cline{1-1} \cline{3-7} 
		0.2                         &                                      & 1899                                           &      19      &     296  & 767                                           &         113      \\ \cline{1-1} \cline{3-7} 
		0.3                         &                                      & 2050                                           &      20      &     282       & 774                                           &         67     \\ \cline{1-1} \cline{3-7} \hline
	\end{tabular}
\end{table}

\begin{table}[ht]
	\caption{Comparison between nominal and robust classifiers on testing data:  class-dependent  covariance matrix (first half), class-independent covariance matrix (second half). }
	\centering	
	\label{Robust_SVM2}
	\begin{tabular}{|c|c|c|c|}
		\hline
		\multirow{2}{*}{$\lambda$} & \multirow{2}{*}{$\rho$} & \textbf{Robust}   & \textbf{Nominal}  \\ \cline{3-4} 
		&                             & \text{accuracy (ordinary rule)} & \text{accuracy (ordinary rule)} \\ \hline
		0.1                         & \multirow{3}{*}{0.01}       &  180, 72.5\%               & 166,  66.9\%                \\ \cline{1-1} \cline{3-4} 
		0.2                         &                             &         200, 80.6\%       & 198, 79.8\%               \\ \cline{1-1} \cline{3-4} 
		0.3                         &                             &       198, 79.8\%        & 193, 77.9\%               \\ \hline
		0.1                         & \multirow{3}{*}{0.3}        &      180, 72.5\%           & 168, 67.8\%          \\ \cline{1-1} \cline{3-4} 
		0.2                         &                             &      200, 80.6\%         & 174, 70.2\%              \\ \cline{1-1} \cline{3-4} 
		0.3                         &                             &      198, 79.8\%         & 172, 69.4\%                \\ \hline \hline
		0.1                         & \multirow{3}{*}{0.01}       &   180, 72.5\%              & 167, 67.4\%           \\ \cline{1-1} \cline{3-4} 
		0.2                         &                             &      200, 80.6\%      & 196, 79.1\%             \\ \cline{1-1} \cline{3-4} 
		0.3                         &                             &    198,  79.8\%      &  193, 77.9\%                \\ \hline
		0.1                         & \multirow{3}{*}{0.3}        &      180, 72.5\%        & 165, 66.6\%          \\ \cline{1-1} \cline{3-4} 
		0.2                         &                             &        200, 80.6\%      & 183, 73.8\%              \\ \cline{1-1} \cline{3-4} 
		0.3                         &                             &      198, 79.8\%       & 159, 64.1\%               \\ \hline
	\end{tabular}
\end{table}

\medskip 

\noindent Tables 1 and 2 give the results of our  algorithm SSP for  robust ($\rho>0$) and nominal ($\rho=0$) classification formulations.  We choose the parameters $\lambda=0.1, 0.2, 0.3$, $\beta = 1.96$, and stopping criterion $10^{-2}$. We consider a dataset of CT scan images having two classes,  covid and non-covid, available at \url{https://www.kaggle.com/plameneduardo/sarscov2-ctscan-dataset}. This dataset contains CT scan images of dimension ranging from $190 \times 190$ to $410 \times 386$ pixels. To implement our algorithm we have taken $1488$ data in which $751$ are of Covid patients and $737$ of Non-Covid patients. Then, we divide them into training data and testing data. For training data we have taken $1240$ images in which $626$ are of Covid and $614$ are of Non-Covid. For testing data we have taken $248$ images in which $125$ are of Covid and $123$ of Non-Covid. We also resize all images into $190\times 190$ pixels. First half of the Tables \ref{Robust_SVM} and \ref{Robust_SVM2} correspond to feature-dependent covariance matrix and the second half to feature-independent covariance matrix. Table \ref{Robust_SVM} shows the results for the training data and Table \ref{Robust_SVM2}  for  the testing data.  As one can see from Tables \ref{Robust_SVM} and \ref{Robust_SVM2}, the robust classifier yields better accuracies on both training and testing datasets.

%%%%%%%%%%%%%%%%%%%%%%%%%%%%%

\subsection{Constrained least-squares}
\noindent Next, we consider  constrained least-squares problem  	\eqref{prob2}.  We compare the performance of our algorithm SSP-LS  and the algorithm in \citep{LevLew:10} on synthetic data matrices $A$ and $C$ generated from a normal distribution.  Both algorithms were stopped when $\max(\|Ax-b\|, \| (Cx-d)_+\|) \leq 10^{-3}$. The results for different sizes of matrices $A$ and $C$ are given in Table \ref{randn}. One can easily see from Table \ref{randn} the superior performance of our algorithm in both, number of full iterations (epochs, i.e. number of passes through data) and cpu time (in seconds). 

\begin{table}[ht]
	\caption{Comparison between SSP-LS  and algorithm in \citep{LevLew:10} in terms of epochs and cpu time (sec) on random least-squares problems.}
	\centering
	\label{randn}
	\begin{tabular}{|c | c | c | c | c | c | c | c|}
		\hline 
		\multirow{2}{*}{\textbf{$\delta=\beta$}}
		%	&\multirow{2}{*}{\textbf{$\beta$}}
		&\multirow{2}{*}{$m$}
		&\multirow{2}{*}{$p$}
		&\multirow{2}{*}{$n$}
		&\multicolumn{2}{c|}{SSP-LS} 
		&\multicolumn{2}{c|}{\citep{LevLew:10}}
		\\ \cline{5-8}
		& & & 
		& \text{epochs} & \text{cpu time (s)}
		& \text{epochs} & \text{cpu time (s)} 
		\\ \hline 
		0.96 & 900 & 900 & $10^3$ & \textbf{755} & \textbf{26.0} & 817 & 29.9 \\ \hline
		1.96 & 900 & 900 & $10^3$ & \textbf{591} & \textbf{20.1} & 787 & 26.2 \\ \hline
		0.96 & 900 & 1100 & $10^3$ & \textbf{624} & \textbf{23.2} & 721 & 23.9 \\ \hline
		1.96 & 900 & 1100 & $10^3$ & \textbf{424} & \textbf{16.7} & 778 & 27.3 \\ \hline
		0.96 & 9000 & 9000 & $10^4$ & \textbf{1688} & \textbf{5272.0} & 1700 & 5778.1 \\ \hline
		1.96 & 9000 & 9000 & $10^4$ & \textbf{1028} & \textbf{3469.2} & 1662 & 5763.7 \\ \hline
		0.96 & 9000 & 11000 & $10^4$ & \textbf{1224} & \textbf{5716.0} & 1437 & 5984.8 \\ \hline
		1.96 & 9000 & 11000 & $10^4$ & \textbf{685} & \textbf{2693.7} & 1461 & 5575.3 \\ \hline
		0.96 & 900 & $10^5$ & $10^3$ & \textbf{5} & \textbf{105.0} & 9 & 163.9 \\ \hline
		1.96 & 900 & $10^5$ & $10^3$ & \textbf{4} & \textbf{77.7} & 9 & 163.9 \\ \hline
		0.96 & 9000 & $10^5$ & $10^4$ & \textbf{64} & \textbf{2054.5} & 214 & 5698.5 \\ \hline
		1.96 & 9000 & $10^5$ & $10^4$ & \textbf{42} & \textbf{1306.5} & 213 & 5156.7 \\ \hline
		0.96 & 9000 & 9000 & $10^5$ & \textbf{23} & \textbf{1820.3} & 23 & 1829.4 \\ \hline
		%		1.96 & 9000 & 9000 & $10^5$ & \textbf{685} & \textbf{2693.6806} & 1461 & 5575.3003 \\ \hline
		0.96 & 9000 & 11000 & $10^5$ & \textbf{21} & \textbf{2158.7} & 23 & 2193.1 \\ \hline
		1.96 & 9000 & 11000 & $10^5$ & \textbf{19} & \textbf{1939.7} & \text{23} & \text{2216.9} \\ \hline
		0.96 & 900 & 900 & $10^5$ & \textbf{14} & \textbf{65.7} & 17 & 86.8 \\ \hline
		1.96 & 900 & 1100 & $10^5$ & \textbf{13} & \textbf{74.3} & 15 & 75.6 \\ \hline
	\end{tabular}
\end{table}

%%%%%%%%%%%%%%%%%%%%%%%%%%%

\subsection{Linear programs}\label{sec5.3}
\noindent Next, we consider solving linear programs (LP) of the form:
\begin{align*}
	&\min_{\mathbf{z} \geq 0} \mathbf{c}^T \mathbf{z}   \quad \text{subject to} \quad  \mathbf{C}  \mathbf{z} \leq \mathbf{d}.  
\end{align*}
Using the primal-dual formulation this problem is equivalent to \eqref{P:least-square}: 
\[  \text{find} \; \mathbf{z} \in [ 0, \infty)^n, \mathbf{\nu}  \in [ 0, \infty)^p:   \mathbf{c}^T \mathbf{z}  + \mathbf{d}^T \mathbf{\nu} =0,  \;   \mathbf{C}  \mathbf{z} \leq \mathbf{d}, \;  \mathbf{C}^T  \mathbf{\nu} + \mathbf{c} \geq 0. \]
Therefore, we can easily identify in \eqref{P:least-square}: \[ x = \begin{bmatrix}
	\mathbf{z}\\
	\nu
\end{bmatrix} \in  \mathcal{Y} =  [ 0,\infty )^{n+p}, \;  {A} = [\mathbf{c}^T \; \mathbf{d}^T] \in \mathbb{R}^{n+p},  \; {C} = \begin{bmatrix}
 \mathbf{C}\qquad 0_{p \times p}\\
0_{n \times  n}\; - \mathbf{C}^T
\end{bmatrix}.  \]
Hence, we can use our algorithm SSP-LS to solve LPs. In Table \ref{LP} we compare the performance of our algorithm, the algorithm in \citep{LevLew:10} and Matlab solver \textit{lsqlin}  for solving the least-squares formulation of LPs taken from  the Netlib library  available on \url{https://www.netlib.org/lp/data/index.html}, and Matlab format LP library available on \url{https://users.clas.ufl.edu/hager/coap/Pages/matlabpage.html}. The first two algorithms were stopped when $\max(\|Ax-b\|, \| (Cx-d)_+\|) \leq 10^{-3}$ and we choose $\delta=\beta=1.96$.  In  Table \ref{LP}, in the first column after the name of the LP we provide the dimension of the matrix $\mathbf{C}$. From Table  \ref{LP}  we observe that SSP-LS is always better than the algorithm in  \citep{LevLew:10} and for large dimensions it also better than \textit{lsqlin}, a Matlab solver specially dedicated for solving constrained least-squares problems.  Moreover, for "qap15" dataset \textit{lsqlin} yields out of memory.
 
\begin{table}[ht]
	\caption{Comparison between SSP-LS, algorithm in \citep{LevLew:10} and Matlab solver lsqlin in terms of epochs and cpu time (sec) on  real data LPs. }
	\centering
	\label{LP}
	\begin{tabular}{|c|c|c|c|c|c|}
		\hline 
		\multirow{2}{*}{\text{LP}} & \multicolumn{2}{c|}{\text{SSP-LS}} & \multicolumn{2}{c|}{  \citep{LevLew:10} } & \text{lsqlin}   \\ \cline{2-6} 
		& \text{epochs}  & \text{time (s)}  & \text{epochs}     & \text{time (s)}     & \text{time (s)}    \\ \hline
		afiro (21$\times$51)                             & \textbf{1163}            & \textbf{1.9}            & 5943               & 2.7              & 0.09                \\ \hline
		beaconfd (173$\times$295)                         & \textbf{1234}             & \textbf{9.9}            &     9213           &     63.5              &      1.0              \\ \hline
		kb2   (43$\times$68)                       & \textbf{10}             & \textbf{0.02}            &    17                 &   0.03                &     0.14                \\ \hline
		sc50a (50$\times$78)                       & \textbf{9}             & \textbf{0.04}            &    879                 &   1.9                &     0.14               \\ \hline
		sc50b  (50$\times$78)                      & \textbf{25}             & \textbf{0.1}            &    411                 &   0.8                &     0.2             \\ \hline
		share2b (96$\times$162)                    & \textbf{332}             & \textbf{1.8}            & 1691               &   84.9                &     0.2              \\ \hline
		degen2      (444$\times$757)              & \textbf{4702}             & \textbf{380.6}      &  5872           &  440.6                 & 9.8               \\ \hline
		fffff800    (524$\times$1028)               & \textbf{44}             & \textbf{5.5}      &  80           &  9.3                 & 3.4                  \\ \hline
		israel   (174$\times$316)               & \textbf{526}             & \textbf{5.4 }     &  3729          & 312.9                 & 0.3              \\ \hline
		lpi bgdbg1  (348$\times$649)        &\textbf{ 476}             & \textbf{15.4}      &  9717           & 263.1                 & 0.5         \\ \hline
		osa 07 (1118$\times$25067)         & \textbf{148}             & \textbf{3169.8}      &  631           & 7169.7                 & 3437.1        \\ \hline
		qap15 (6330$\times$22275)         &    \textbf{70}         &  \textbf{2700.7}     &  373           &  7794.6               &    *  \\ \hline
        fit2p (3000$\times$13525)         &       \textbf{7}      &  \textbf{65.7}     &      458       &   2188.3              &    824.9  \\ \hline
        maros r7 (3137$\times$9408)         &       \textbf{635}      &  \textbf{2346.8}     &   1671          &  3816.2               &   3868.8   \\ \hline
        qap12 (3193$\times$8856)         &       \textbf{54}      &  \textbf{374.1}     &  339        &    1081.8         & 3998.4     \\ \hline
	\end{tabular}
\end{table}

%%%%%%%%%%%%%

\subsection{Sparse linear SVM}
Finally, we consider the sparse linear SVM classification problem:  
\begin{align*}
	& \min_{w,d,u} \; \lambda \sum_{i=1}^{n} u_i +\|w\|_1  \\
	& \text{subject to}:   \;\;  y_i (w^T z_i + d) \geq 1 -u_i,  \;\;  \; u_i \geq 0 \quad   \forall i=1:N.   
\end{align*}
This can be easily recast as an LP and consequently as a least-squares problem.  In  Table  \ref{LinearSVM} we report the results provided by our algorithms  SSP-LS for solving sparse linear SVM problems. Here, the ordinary error has the same meaning as in Section \ref{sec:5.1}.  We use several datasets:  covid dataset  from \url{www.kaggle.com/plameneduardo/sarscov2-ctscan-dataset};  PMU-UD, sobar-72 and divorce datasets available on \url{https://archive-beta.ics.uci.edu/ml/datasets}; and the rest from LIBSVM library  \url{https://www.csie.ntu.edu.tw/~cjlin/libsvmtools/datasets/}.  In the first column, the first argument  represents the number of features and the second argument represents the number of data.  We divided each dataset into $80\%$ for training  and $20\%$ for testing.  For the LP formulation we use the simple observation that any scalar  $u$ can be written as $u= u_{+} - u_{-}$, with $u_{+}, u_{-} \geq 0$.   We use the same  stopping criterion as in Section \ref{sec5.3}.  In Table \ref{LinearSVM} we  provide the number of relevant features, i.e., the number of nonzero elements of the optimal $w_*$, and the number of misclassified data (ordinary error) on  real training and testing datasets. 
\begin{table}[ht]
	\caption{Performance of sparse linear SVM classifier: ordinary error and sparsity of  $w_*$ on real training and testing datasets.}
	\centering
	\label{LinearSVM}
	\begin{tabular}{|c|c|c|c|c|}
		\hline 
		\text{Dataset}               & $\lambda$ & \textbf{$w_* \neq 0$}      & \text{ordinary error (train)} & \text{ordinary error (test)} \\ \hline
		\multirow{2}{*}{sobar-72 (38$\times$72)}     & 0.1        & 7/38     & 9/58                     & 3/14                    \\ \cline{2-5} 
		& 0.5        & 12/38    & 1/58                     & 2/14                    \\ \hline
		\multirow{2}{*}{breastcancer (18$\times$683)}   & 0.1        & 9/18     & 12/547                   & 13/136                  \\ \cline{2-5} 
		& 0.5        & 9/18     & 17/547                   & 7/136                   \\ \hline
		\multirow{2}{*}{divorce (108$\times$170)}       & 0.1        & 23/108   & 3/136                    & 0/34                    \\ \cline{2-5} 
		& 0.5        & 13/108   & 0/136                    & 1/34                    \\ \hline
		\multirow{2}{*}{caesarian (10$\times$80)}     & 0.1        & 0/10(NA) &  *                    &  *                   \\ \cline{2-5} 
		& 0.5        & 5/10     & 14/64                    & 9/16                    \\ \hline
		\multirow{2}{*}{cryotherapy (12$\times$90)}  & 0.1        & 3/12     & 8/72                     & 5/18                    \\ \cline{2-5} 
		& 0.5        & 6/12     & 6/72                     & 1/18                    \\ \hline
		\multirow{2}{*}{PMU-UD (19200$\times$1051)}        & 0.1        & 13/19200 & 0/841                    & 70/210                  \\ \cline{2-5} 
		& 0.5        & 37/19200 & 0/841                    & 47/210                  \\ \hline
		\multirow{2}{*}{Covid (20000$\times$2481)}         & 0.1        &   428/20000    &  146/1985                  &   101/496                \\ \cline{2-5} 
		& 0.5        &     752/20000     &142/1985               &       97/496            \\ \hline
		\multirow{2}{*}{Nomao (16$\times$34465)}         & 0.1        &  12/14       &    3462/27572                    &      856/6893      \\ \cline{2-5} 
		& 0.5        &   11/14     &   3564/27572           &    885/6893             \\ \hline
%		\multirow{2}{*}{Immunotherapy}(16$\times$90) & 0.1        & NA       &                          &                         \\ \cline{2-5} 
%		& 0.5        & NA       &                          &                         \\ \hline
%		\multirow{2}{*}{plrx} (26$\times$182)         & 0.1        & NA       &                          &                         \\ \cline{2-5} 
%		& 0.5        & NA       &                          &                         \\ \hline
		\multirow{2}{*}{Training (60$\times$11055)}          & 0.1        &   31/60     &  630/8844                      &           152/2211              \\ \cline{2-5} 
		& 0.5        &   30/60     &       611/8844                   &        159/2211                 \\ \hline
		\multirow{2}{*}{leukemia (14258$\times$38)}          & 0.1        &   182/14258     &  9/31                      &           2/7              \\ \cline{2-5} 
		& 0.5        &  271/14258      &   9/31                       &               2/7          \\ \hline
		\multirow{2}{*}{mushrooms (42$\times$8124)}          & 0.1        &   30/42     &     426/6500           &   121/1624                  \\ \cline{2-5} 
		& 0.5        &    31/42    &  415/6500                         &    105/1624                    \\ \hline
		%\multirow{2}{*}{australian} (28$\times$690)         & 0.1        &  0/28      &      NA          &   NA                   \\ \cline{2-5} 
		%& 0.5        &   0/28     &         NA                &           NA             \\ \hline
		\multirow{2}{*}{ijcnn1 (28$\times$49990)}          & 0.1        &   11/28     &   3831/39992            &   1021/9998                   \\ \cline{2-5} 
		& 0.5        &   10/28     &          3860/39992              &   992/9998             \\ \hline
		\multirow{2}{*}{phishing (60$\times$11055)}          & 0.1        &    56/60    &     2953/8844       &  728/2211                 \\ \cline{2-5} 
		& 0.5        &   51/60     &      2929/8844            &    725/2211                  \\ \hline
	\end{tabular}
\end{table}

%%%%%%%%%%%%%%%%%%%%%%%%%%%%%%%

\section*{Appendix}
\textbf{1.} \textit{Proof of inequality \eqref{eq:socc}}. \blue{We want to find an  hyperplane  parameterized by  $(w,d)$ which does not intersect any ellipsoidal data uncertainty model. Thus, one can easily see that the relaxed (via slack variable $u_i$)  robust linear inequality}
\[  y_i (w^T \bar{z}_i + d) \geq \blue{ - u_i}  \quad \forall \bar{z}_i \in {\cal Z}_i    \]
over the ellipsoid with the center in $z_i$
$${\cal Z}_i = \{ \bar{z}_i:  \langle Q_i(\bar{z}_i - z_i), \bar{z}_i - z_i \rangle \leq 1  \}, $$
can be written as optimization problem whose minimum value must satisfy: 
\begin{align*}
\blue{ - u_i}  \leq  & \min_{\bar{z}_i} \; y_i(w^T \bar{z}_i + d)\\
	&\text{subject to:} \;  (\bar{z}_i - z_i)^T Q_i(\bar{z}_i - z_i) - 1 \leq 0.
\end{align*}
The corresponding dual problem is as follows:
\begin{align}\label{Dual}
	& \max_{\lambda\geq0} \min_{\bar{z}_i} y_i( w^T \bar{z}_i + d) + \lambda ((\bar{z}_i - z_i)^T Q_i(\bar{z}_i - z_i) - 1).
\end{align}
Minimizing the above problem with respect to $\bar{z}_i$, we obtain: 
\begin{align}
\label{zBar}
 y_i w + 2 \lambda Q_i(\bar{z}_i - z_i) = 0  \iff  \bar{z}_i = z_i -\frac{y_i}{2 \lambda} Q_i^{-1} w.
\end{align} 
By replacing this value of $\bar{z}_i$ into the dual problem (\ref{Dual}), we get:
\begin{align*}
	\max_{\lambda\geq 0} \left[  - \frac{y_i^2}{4 \lambda}w^T Q_i^{-1} w +  y_i(w^T z_i + d) - \lambda\right].
\end{align*}
For the dual optimal  solution 
\begin{align*}
\lambda^* = \frac{1}{2} \sqrt{y_i^2(w^T Q_i^{-1} w)}= \frac{1}{2} \sqrt{(w^T Q_i^{-1} w)},
\end{align*}
we get the primal optimal solution
\begin{align*}
	&\bar{z}_i^* = z_i - \frac{y_i Q_i^{-1} w}{\sqrt{w^T Q_i^{-1} w}},  
\end{align*}
and consequently the second order cone condition 
\begin{align*}
y_i \left(w^Tz_i + d\right) \geq \|Q_i^{-1/2}w\| \blue{ - u_i} .
\end{align*}

%%%%%%%%%%%%%%%%%%%%%%%%%%%%%%%%%%%

\section*{Acknowledgments}
The research leading to these results has received funding from:  NO Grants 2014–2021, under project ELO-Hyp, contract no. 24/2020; UEFISCDI PN-III-P4-PCE-2021-0720, under project L2O-MOC, nr.  70/2022.

%%%%%%%%%%%%%%%%%%%%%%%%%%%%%%%

\bibliographystyle{authordate1}

\begin{thebibliography}{99}
	
	\bibitem[Bauschke and  Borwein, 1996]{BauBor:96}
	H. Bauschke and J. Borwein, \textit{On projection algorithms for solving convex feasibility problems}, SIAM Review, 38(3): 367--376, 1996.
	
	\bibitem[Bertsekas, 2011]{Ber:11}
	D.P. Bertsekas,  {\em Incremental proximal methods for large scale convex optimization},  Mathematical Programming, 129(2): 163--195, 2011.
	
	\bibitem[Bianchi et al., 2019]{BiaHac:17}
	P. Bianchi, W. Hachem and A. Salim,  {\em A constant step forward-backward algorithm involving random maximal monotone operators}, Journal of Convex Analysis, 26(2): 397--436, 2019.
	
	\bibitem[Bhattacharyya et al., 2004]{BhaGra:04}
	C.  Bhattacharyya,  L.R. Grate,  M.I. Jordan, L. El Ghaoui and S. Mian, \emph{Robust sparse hyperplane classifiers: Application to uncertain molecular profiling data},  Journal of Computational Biology, 11(6): 1073--1089, 2004.
	
	\bibitem[Devolder et al., 2014]{DevGli:14}
	O. Devolder,  F.  Glineur and Yu. Nesterov,  \textit{First-order  methods of smooth convex optimization with inexact oracle}, Mathematical Programming, 146: 37--75, 2014.
	
	\bibitem[Duchi and  Singer, 2009]{DucSin:09}
	J. Duchi and  Y. Singer, \textit{Efficient online and batch learning using forward backward splitting}, Journal of Machine Learning 	Research, 10: 2899--2934, 2009.
	
	%\bibitem[Gubin et al., 1967]{GubPol:67}
	%L.G. Gubin, B.T. Polyak  and E.V. Raik, \emph{The method of projections for finding the common point of convex sets},  Computational Mathematics and Mathematical Physics, 7(6):  1211--1228, 1967.
	
	\bibitem[Garrigos et al., 2023]{GarGow:23}
	G. Garrigos and R.M. Gower, \textit{Handbook of convergence theorems for (stochastic) gradient methods}, arXiv:2301.11235v2, 2023. 
	
	\bibitem[Hardt et al., 2016]{HarSin:16}
M. Hardt, B. Recht and Y. Singer, \textit{Train faster, generalize better: stability of stochastic gradient descent}, International Conference on Machine Learning, 2016.
	
	\bibitem[Hermer et al., 2020]{HerStu:20}
N. Hermer, D.R. Luke and A. Sturm, \textit{Random function iterations for stochastic fixed point problems}, arXiv:2007.06479, 2020.
	
	\bibitem[Horn and Johnson, 2012]{HorJon:12}
	R. A. Horn and C.R. Johnson, \textit{Matrix Analysis}, Cambridge University Press, 2012.
	
	\bibitem[Kundu et al., 2018]{KunBac:18}
	A. Kundu, F. Bach and C. Bhattacharya, {\em Convex optimization over inter-section of simple sets: improved convergence rate guarantees via an exact penalty approach},  International Conference on Artificial Intelligence and Statistics, 2018.
	
	\bibitem[Lewis and Pang, 1998]{LewPan:98}
	A. Lewis and J.  Pang, \emph{Error bounds for convex inequality systems}, Generalized Convexity, Generalized Monotonicity (J. Crouzeix, J. Martinez-Legaz and M. Volle eds.), Cambridge University Press, 75--110, 1998.
	
	\bibitem[Leventhal and Lewis, 2010]{LevLew:10}
	D. Leventhal  and  A.S. Lewis. \emph{Randomized Methods for linear constraints: convergence rates and conditioning}, Mathematics of Operations Research, 35(3):  641--654, 2010.
	
	\bibitem[Moulines and Bach, 2011]{MouBac:11}
	E. Moulines and F. Bach, \textit{Non-asymptotic analysis of	 stochastic approximation algorithms for machine learning},  Advances in Neural Information Processing Systems Conf., 2011.
	
	\bibitem[Mordukhovich and Nam, 2005]{MorNam:05}
	B.S. Mordukhovich and N.M.  Nam, \textit{Subgradient of distance functions with applications to Lipschitzian stability}, Mathematical Programming, 104: 635--668, 2005.
	
	\bibitem[Necoara, 2021]{Nec:20}
	I. Necoara, \textit{General convergence analysis of stochastic first order methods for composite  optimization},  Journal of Optimization Theory and Applications, 189: 66--95  2021.
	
	\bibitem[Necoara et al.,  2019]{NecNes:15}
	I. Necoara, Yu. Nesterov and F. Glineur, \textit{Linear convergence	of first  order methods for non-strongly convex optimization}, Mathematical Programming, 175(1): 69--107, 2019. 
	
   %\bibitem[Necoara et al., 2019]{NecRic:16}
   %I. Necoara, P. Richtarik and A. Patrascu, \textit{Randomized projection methods for convex feasibility problems: conditioning and convergence rates}, Siam Journal on Optimization, 29(4): 2814--2852, 2019.
	
	\bibitem[Nedelcu et al., 2014]{NedNec:14}
	V. Nedelcu, I. Necoara and Q. Tran Dinh, \textit{Computational complexity of inexact gradient augmented Lagrangian methods:  application to constrained MPC}, SIAM Journal  on  Control and Optimization, 52(5): 3109--3134, 2014.
	
	\bibitem[Nemirovski and Yudin, 1983]{NemYud:83}
A. Nemirovski and D.B. Yudin, \textit{Problem complexity and method efficiency in optimization}, Wiley Interscience, 1983.
	
	\bibitem[Nemirovski et al., 2009]{NemJud:09}
	A. Nemirovski, A. Juditsky, G. Lan and A. Shapiro, \textit{Robust stochastic approximation approach to stochastic programming}, SIAM Journal  Optimization, 19(4): 1574--1609, 2009.
	
	\bibitem[Nesterov, 2018]{Nes:18}
	Yu. Nesterov, \textit{Lectures on Convex Optimization},  Springer Optimization and Its Applications, 137, 2018.
	
	\bibitem[Nedich, 2011]{Ned:11}
	A. Nedich,  \textit{Random algorithms for convex minimization problems}, Mathematical Programming, 129(2): 225--273, 2011.
	
	\bibitem[Nedich and  Necoara, 2019]{AngNec:19}
	A. Nedich and I. Necoara, \emph{Random minibatch subgradient algorithms for
		convex problems with functional constraints}, Applied Mathematics and Optimization, 8(3): 801--833, 2019.
	
	%\bibitem[Nedich and  Bertsekas, 2000]{NedBer:00}
	%A. Nedich and D. Bertsekas, \textit{Convergence rate of incremental	subgradient algorithms}, in Stochastic Optimization: Algorithms and Applications, S. Uryasev and P. Pardalos eds., 263--304, 2000.
	
	\bibitem[Necoara and Patrascu, 2018]{PatNec:18}
	A. Patrascu and  I. Necoara, \emph{On the convergence of inexact projection first order methods for convex minimization}, IEEE Transactions  Automatic Control, 63(10): 3317--3329, 2018.
	
	\bibitem[Patrascu and  Necoara, 2018]{PatNec:17}
	A. Patrascu  and I. Necoara, \textit{Nonasymptotic convergence of stochastic proximal point algorithms for constrained convex optimization}, Journal of Machine Learning Research, 18(198): 1--42, 2018.
	
	\bibitem[Pena et al., 2021]{PenVer:18}
	J. Pena, J. Vera and L. Zuluaga, \emph{New characterizations of Hoffman constants for systems of linear constraints},  Mathematical Programming,  187: 79--109, 2021. 
	
	\bibitem[Polyak, 1969]{Pol:69}
    B.T.  Polyak,  \emph{ Minimization of unsmooth functionals }, USSR Computational Mathematics and Mathematical Physics, 9 (3), 14-29, 1969.
	
%	\bibitem[Polyak, 1987]{Pol:87}
%	B.T.  Polyak,  \emph{  Introduction to optimization}, Optimization Software, 1987.
	
    \bibitem[Polyak, 2001]{Pol:01}
    B.T.  Polyak,  \emph{ Random algorithms for solving convex inequalities}, Studies in  Computational Mathematics, 8: 409--422,  2001.
	
	\bibitem[Robbins and Monro, 1951]{RobMon:51}
H. Robbins and S. Monro, \textit{A Stochastic approximation method}, The Annals of Mathematical Statistics, 22(3): 400–407, 1951.
	
	\bibitem[Rockafellar and  Uryasev, 2000]{RocUry:00}
	R.T.  Rockafellar and S.P. Uryasev, \emph{Optimization of conditional value-at-risk}, Journal of Risk,  2: 21--41, 2000. 
	
	\bibitem[Rosasco et al., 2019]{RosVil:14}
	L. Rosasco, S. Villa and B.C. Vu, \textit{Convergence of stochastic	proximal  gradient algorithm},  Applied Mathematics and  Optimization, 82: 891--917 , 2019.
	
	\bibitem[Rasch and Chambolle, 2020]{RasCha:20}
J. Rasch and A. Chambolle, \textit{Inexact first-order primal–dual algorithms}, Computational Optimization and Applications, 76: 381–-430, 2020.
	
	\bibitem[Tibshirani, 2011]{Tib:11}
	R. Tibshirani,  \textit{The solution path of the generalized
		lasso}, Phd Thesis, Stanford Univ., 2011.
	
	\bibitem[Tran-Dinh et al., 2018]{TraFer:18}
	Q. Tran-Dinh, O. Fercoq, and V. Cevher,  {\em A smooth primal-dual optimization framework for nonsmooth composite convex minimization}, SIAM Journal on Optimization, 28(1): 96--134, 2018.
	
	\bibitem[Villa et al., 2014]{VilRos:14}
	S. Villa, L. Rosasco, S. Mosci and A. Verri,  \textit{Proximal methods for the latent group lasso penalty}, Computational Optimization and Applications, 58: 381--407, 2014.
	
	\bibitem[Vapnik, 1998]{Vap:98}
	V. Vapnik, \textit{Statistical learning theory}, John Wiley, 1998.
	
	\bibitem[Weston et al., 2003]{WesSch:03}
J. Weston, A. Elisseeff and B. Scholkopf, \textit{Use of the zero norm with linear models and kernel methods}, Journal of Machine Learning Research, 3: 1439--1461, 2003.
	
	\bibitem[Wang et al., 2015]{WanChe:15}
	M. Wang, Y. Chen, J. Liu and Y. Gu, {\em  Random multiconstraint projection: stochastic gradient methods for convex optimization with many constraints},  arXiv: 1511.03760, 2015.
	
	\bibitem[Xu, 2020]{Xu:18}
	Y. Xu,  \emph{Primal-dual stochastic gradient method for convex programs with many functional constraints},  SIAM Journal on Optimization, 30(2): 1664--1692,  2020. 
	
	\bibitem[Yang and  Lin, 2016]{YanLin:16}
	T. Yang and Q. Lin, \textit{RSG: Beating subgradient method without smoothness and strong convexity},  Journal of Machine Learning Research, 19(6): 1--33, 2018.
	
\end{thebibliography}

\end{document}